\newcommand{\N}{{\mathbb N}}
\newcommand{\R}{{\mathbb R}}
\newcommand{\sA}{{\mathcal A}}
\newcommand{\sM}{{\mathcal M}}
\newcommand{\sR}{{\mathcal R}}
\newcommand{\inv}{^{-1}}
\newcommand{\ep}{\epsilon}
\newcommand{\Rm}{\mathrm{Rm}}
\newcommand{\Rc}{\mathrm{Rc}}
\newcommand{\Vol}{\mathrm{Vol}}
\newcommand{\inj}{\mathrm{inj}}
\newcommand{\RNum}[1]{\uppercase\expandafter{\romannumeral #1\relax}}
\let\epsilon\varepsilon
\newcommand{\be}{\begin{equation}}
\newcommand{\ee}{\end{equation}}
\newcommand{\ba}{\begin{align*}}
\newtheorem{thm}{Theorem}[section]
\newtheorem{lem}[thm]{Lemma}
\newtheorem{claim}[thm]{Claim}
\newtheorem{prop}[thm]{Proposition}
\theoremstyle{definition}
\newtheorem{remark}[thm]{Remark}
\def\mod#1{{\ifmmode\text{\rm\ (mod~$#1$)}
\else\discretionary{}{}{\hbox{ }}\rm(mod~$#1$)\fi}}
\begin{document}

\setlength\parindent{0pt}

        \author[A. Martens]{Adam Martens}
        \address{Department of Mathematics, The University of British Columbia,
1984 Mathematics Road, Vancouver, B.C.,  Canada V6T 1Z2}
\email{martens@math.ubc.ca}

        \bibliographystyle{amsplain}
\title{Removing scalar curvature assumption for Ricci flow smoothing}

\maketitle

\pagestyle{plain}

\vspace{-3em}

\begin{abstract}
In recent work of Chan-Huang-Lee, it is shown that if a manifold enjoys uniform bounds on (a) the negative part of the scalar curvature, (b) the local entropy, and (c) volume ratios up to a fixed scale, then there exists a Ricci flow for some definite time with estimates on the solution assuming that the local curvature concentration is small enough initially (depending only on these a priori bounds). In this work, we show that the bound on scalar curvature assumption (a) is redundant. We also give some applications of this quantitative short-time existence, including a Ricci flow smoothing result for measure space limits, a Gromov-Hausdorff compactness result, and a topological and geometric rigidity result in the case that the a priori local bounds are strengthened to be global. 
\end{abstract}
\vspace{5pt}
Mathematics Subject Classification code(s): \textbf{53E20} \emph{Ricci flows}.

\section{Introduction}

In \cite{CHL}, Chan-Huang-Lee prove a quantitative version of short-time Ricci flow existence, with scale-invariant estimates on the solution, starting from a metric $g$ that has effective uniform bounds on the negative part of the scalar curvature, volume ratios up to a fixed scale, and local $\bar{\nu}$-entropy, assuming that the local scale-invariant integral curvature (so-called ``curvature concentration") is uniformly dependently small. Although it is not necessary to assume that $g$ has uniformly bounded sectional curvature, the authors of \cite{CHL} do impose that it should enjoy an \emph{ineffective} lower Ricci bound. To summarize precisely, the initial metric $g$ should satisfy
\be
\begin{cases}\label{CHLassumptions}
\sR_g\geq -\lambda,\\
\Vol_g B_g(x,r)\leq \mathfrak{v}_0 r^n \;\text{ for all } x\in M, r\in (0,1],\\
\bar\nu(B_g(x,1), g, \tau)\geq -A  \;\text{ for all } x\in M,  \\
\left(\int_{B_{g}(x,1)} |\Rm|_g^{\frac{n}{2}}\,dV_g\right)^{\frac{2}{n}}\leq \sigma \;\text{ for all } x\in M,\text{ and }\\
\inf_{x\in M}\Rc_g(x)>-\infty,
\end{cases}
\ee
where $\lambda, \mathfrak{v}_0, \tau, A$ can be prescribed arbitrarily, but $\sigma$ depends on these choices. We direct the reader to \cite{WangA} for the definition and properties of the localized $\nu$- and $\bar\nu$-functionals. The ineffective lower Ricci bound in \eqref{CHLassumptions} is expected not to be necessary as it is only ever used to construct a smooth distance-like function on $M$, which in turn facilitates the approximation of the possibly unbounded curvature metric $g$ by a sequence of bounded curvature metrics satisfying the local assumptions \eqref{CHLassumptions} (with possibly different constants). As commented on in \cite{CHL}, this assumption may be weakened to a condition on how quickly $\Rc_{g}^-$ grows at spatial infinity, though we do not pursue this here. Rather, the purpose of this work is to show that the scalar curvature bound assumption in \eqref{CHLassumptions} is redundant in the sense that the required smallness of $\sigma$ and the estimates of the solution need not depend on $\lambda$. We prove the following. \\

\vspace{5pt}

\begin{thm}\label{shortimeexistencemain}
For all $n\geq 4,A, \tau, \mathfrak{v}_0>0$, there are $C_0, \sigma, T>0$ depending only on $n,A,\tau,\mathfrak{v}_0$ such that the following holds. Let $(M^n,g)$ be a complete noncompact manifold (not necessarily bounded curvature) such that for some $\ep\leq \sigma$ and all $x\in M$, the metric $g$ satisfies: 
\begin{enumerate}[(i)]
\item $\Vol_g B_g(x,r)\leq \mathfrak{v}_0 r^n$ for all $r\leq 1$; \label{shorttimeexistencehyp1}
\item $\bar\nu(B_{g}(x, 1), g, \tau)\geq -A$; \label{shorttimeexistencehyp2}
\item $\left(\int_{B_{g}(x,1)} |\Rm|_g^{\frac{n}{2}}\,dV_g\right)^{\frac{2}{n}}\leq \ep$; and \label{shorttimeexistencehyp3}
\item $\inf_{x\in M}\Rc_g(x)>-\infty$.\label{shorttimeexistencehyp4}
\end{enumerate}
Then there exists a complete Ricci flow $g(t)$ on $M\times [0,T]$ with $g(0)=g$ such that for all $(x,t)\in M\times (0,T]$, there holds
\be\label{shorttimeexistenceconclusions}
\begin{cases}
&|\Rm|_{g(t)}(x)\leq C_0 \ep t\inv, \\
&\inj_{g(t)}(x)\geq C_0\inv \sqrt t,\text{ and}\\
&\left(\int_{B_{g(0)}(x,\frac{1}{16})} |\Rm|_{g(t)}^{\frac{n}{2}}\,dV_{g(t)}\right)^{\frac{2}{n}}\leq C_0\ep.
\end{cases}
\ee
\end{thm}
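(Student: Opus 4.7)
The overall strategy is threefold: (1) use the ineffective Ricci bound to reduce to the bounded-curvature setting via approximation, (2) revisit the CHL proof to show that its quantitative estimates can be localized and made independent of any effective scalar curvature bound, and (3) pass to a limit.

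For step (1), as noted in the introduction, hypothesis (\ref{shorttimeexistencehyp4}) allows the construction of an exhaustion of $M$ by complete metrics $g_k$ of bounded sectional curvature that coincide with $g$ on open sets $\Omega_k \nearrow M$. Since the hypotheses (\ref{shorttimeexistencehyp1})--(\ref{shorttimeexistencehyp3}) are local, each $g_k$ satisfies them on any compact set in the interior of $\Omega_k$ with constants matching those of $g$. However, $\lambda_k := \sup_M(-\sR_{g_k})$ will in general blow up as $k\to\infty$, so a direct application of \cite{CHL} produces Ricci flows $g_k(t)$ whose existence time and estimates a priori degenerate with $\lambda_k$.

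The core of the argument is step (2): showing that for any $x$ sufficiently deep inside $\Omega_k$, the CHL estimates at $x$ depend only on the hypotheses (\ref{shorttimeexistencehyp1})--(\ref{shorttimeexistencehyp3}) on a ball $B_g(x, R_0)$ of definite radius, and in particular not on $\lambda_k$. Tracing through the CHL proof, the pointwise bound $\sR_g\geq -\lambda$ enters in (a) the integral identities derived from the evolution of the volume form along Ricci flow, and (b) the localized entropy monotonicity used to propagate hypothesis (\ref{shorttimeexistencehyp2}). The plan is to replace these pointwise uses by: absorbing the scalar term via the $L^{n/2}$-smallness of $|\Rm|_g$ from hypothesis (\ref{shorttimeexistencehyp3}) into the parabolic integral estimates, and using the local Sobolev and log-Sobolev inequalities implied by the local $\bar\nu$-bound \cite{WangA} to dominate any remaining error terms. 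A pseudolocality-type argument then propagates these local bounds along $g_k(t)$ to yield the desired estimates with constants depending only on $n, A, \tau, \mathfrak{v}_0$.

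Step (3) is essentially routine: with uniform interior estimates on $g_k(t)$ in hand, Hamilton's compactness theorem (applied using the curvature bound $|\Rm|_{g_k(t)}\leq C_0\varepsilon/t$ and the injectivity radius lower bound $\inj_{g_k(t)}\geq C_0^{-1}\sqrt{t}$) produces a smooth limiting Ricci flow $g(t)$ on $M\times[0,T]$, complete by the uniform injectivity radius bound, with $g(0)=g$ and satisfying \eqref{shorttimeexistenceconclusions}. The main obstacle I anticipate is the localization of $\bar\nu$-entropy monotonicity: since the $\bar\nu$-functional is defined on balls, one must carefully track how the underlying balls deform along the flow and choose cutoff scales compatible with the smallness parameter $\varepsilon$, so that the cutoff error terms neither cost powers of $\lambda_k$ nor spoil the local volume ratio on which the Sobolev inequality depends. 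Subsidiary obstacles include verifying that the local $L^{n/2}$ curvature concentration is preserved on slightly smaller balls along $g_k(t)$ purely from the smallness hypothesis, without recourse to the scalar curvature bound.
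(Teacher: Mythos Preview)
Your three-step outline (approximate by bounded-curvature metrics, prove $\lambda$-independent estimates, pass to a limit) matches the paper's architecture, and your instinct that the $L^{n/2}$-smallness of curvature should substitute for the pointwise scalar bound is the right one. However, you have misidentified \emph{where} that substitution must occur, and the step you label ``subsidiary'' is in fact the heart of the matter.

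In the CHL/Martens argument the scalar lower bound is not primarily used in the entropy or Sobolev-inequality propagation; that step (Step~1 in the paper's Claim~\ref{shorttimeclaim}) carries over with only cosmetic changes, and no pseudolocality is invoked. The scalar bound enters through the \emph{Expanding Balls Lemma}: to control distance expansion one bounds volume growth via $\partial_t\,dV_{g(t)} = -\sR_{g(t)}\,dV_{g(t)}$, and the standard argument uses $\sR\ge -\lambda$. The paper's key new ingredient is a Volume Perturbation estimate (Lemma~\ref{volumeperturbation}): if $\|\sR_{g(t)}\|_{L^{n/2}(\Omega)}\le \sigma$ and a lower volume-ratio bound holds along the flow, then H\"older gives $|\partial_t \Vol_{g(t)}(\Omega)| \le \sigma v^{-2/n} r^{-2}\Vol_{g(t)}(\Omega)$, which yields a $\lambda$-free Expanding Balls Lemma (Lemma~\ref{EBL}). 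This is exactly what is needed in Step~3 of the bootstrap to show the curvature concentration stays small: EBL gives $B_{g(0)}(x,r_0)\subset B_{g(t)}(x,1)$, so the evolving integral can be compared to the initial volume bound. To make Lemma~\ref{EBL} applicable the paper also redefines the maximal time $T'$ using $g(0)$-balls rather than $g(t)$-balls (compare \eqref{modifiedg(0)assumption} with \eqref{originalg(t)assumption}), a subtlety your plan does not anticipate.

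A minor point on your Step~(3): Hamilton's compactness alone gives convergence only on $(0,T]$. The paper instead uses that $g_j=g$ on compacta together with Chen's local curvature estimate and Shi's modified interior estimates to obtain $C^\infty_{loc}$-convergence on all of $M\times[0,T]$, which is what secures $g(0)=g$.
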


\vspace{5pt}

Before we provide the applications of Theorem \ref{shortimeexistencemain}, we first make some remarks regarding its hypotheses and statement. 

\begin{itemize}
\item We have already mentioned above that the ineffective lower Ricci bound \eqref{shorttimeexistencehyp4} is likely a redundant hypothesis as it is only used to construct a distance-like function on $(M,g)$ satisfying some desired properties. Of particular note here, we suggest that the upper volume bound assumption \eqref{shorttimeexistencehyp1} (here and throughout the paper) should \emph{also} be redundant. Some evidence for this is discussed after Question 1 below. On the other hand, the $C^0$-control in the form of a $\bar\nu$-functional bound \eqref{shorttimeexistencehyp2} and the proportional smallness of the curvature concentration \eqref{shorttimeexistencehyp3} are a somewhat natural pairing from a Ricci flow point of view (see \cite{Martens} and the references within for more discussion on this).
\item The choice of $r=\frac{1}{16}$ in the curvature concentration in \eqref{shorttimeexistenceconclusions} is for convenience, not necessity; any $r<1$ would work with appropriate modification. Also, by applying the Shrinking Balls Lemma \cite{ST}, the domain in the integral could be replaced with the ball of radius $r$ with respect to $g(t)$ after possibly shrinking $T$ further.
\item The constants $C_0, \sigma, T$ in Theorem \ref{shortimeexistencemain} can be explicitly bounded in terms of $n,A,\tau,\mathfrak{v}_0,$ though we do not pursue this here because our methods for bounding these variables would be far from sharp. For example, suppose $\tau=1$ and $\mathfrak{v}_0=2\omega_n$, just for simplicity. Then, since the local curvature concentration ought to be preserved by the flow (up to a dimensional constant perhaps), we can compute that an optimal bound on $\sigma$, from the parabolic point of view at least, should be
\be\label{expectedboundsigma}
\sigma\geq \frac{1}{C(n)e^{\frac{2A}{n}}}.
\ee
This expected bound on $\sigma$ comes from the equivalence between the $\bar\nu$-functional and an $L^2$-Sobolev inequality, and the relationship between this Sobolev inequality and the curvature concentration via H\"older's inequality. The reason why we would be unable to achieve a bound of the form \eqref{expectedboundsigma} with our current machinery is due to the fact that there is not a known sharp relationship between volume ratios and $L^2$-Sobolev inequalities in spaces with possibly negative Ricci curvature. We direct the reader to sections 1 and 2 of \cite{Martens} for more related discussion. 
\end{itemize}

\vspace{5pt}

We also give two applications of our short-time existence theorem. The first of which can be viewed either (a) as a Ricci flow smoothing result for measure spaces $(M,\mu)$ that are measure-space limits of a sequence of smooth manifolds $(M_i,g_i)$ satisfying Theorem \ref{shortimeexistencemain}, or (b) as a Gromov-Hausdorff compactness result for the family of metrics satisfying the hypotheses of Theorem \ref{shortimeexistencemain}. One important thing to point out about Theorem \ref{smoothingcorollary} is that the only $L^\infty$ assumption on the curvature is that each $g_i$ satisfies \emph{some} uniform lower Ricci bound, but that bound need not be the same for each $i\in\N$.

\vspace{5pt}

\begin{thm}\label{smoothingcorollary}
For any $n\geq 4, A,\tau,\mathfrak{v}_0>0$, let $(M_i^n, g_i, p_i)$ be any sequence of complete pointed Riemannian manifolds (not necessarily bounded curvature) each satisfying hypotheses \eqref{shorttimeexistencehyp1} - \eqref{shorttimeexistencehyp4}. Then there exists a smooth pointed manifold $(M_\infty, x_\infty)$ endowed with a complete distance metric $d_0$ (which generates the same topology as the smooth structure of $M_\infty$) and a Radon measure $\mu$ such that, up to a subsequence, 
\be\label{metricmeasurespaceconvergence}
(M_i, d_{g_i},\Vol_{g_i},p_{i}) \longrightarrow(M_\infty,d_0,\mu, x_\infty)
\ee
where the convergence is as pointed metric measure spaces (as described precisely below). Moreover, there exists a complete Ricci flow $g(t)$ on $M_\infty\times (0,T(n,A,\tau,\mathfrak{v}_0)]$ satisfying the estimates \eqref{shorttimeexistenceconclusions} which emerges from $(M_\infty, \mu)$ in the weak sense, i.e., that for any precompact Borel set $E\subset M_\infty$, there holds 
\[
\lim_{t\to 0}\Vol_{g(t)}(E)=\mu(E).
\]
The notion in which the convergence \eqref{metricmeasurespaceconvergence} holds is as follows. There is some subsequence $\{i_k\}_{k\in \N}$ of $\N$, an exhausting sequence $\Omega_k\subset M_\infty$ of open sets, and functions $F_k: \Omega_k\to M_{i_k}$ (each of which are diffeomorphisms onto their images) with $F_k(x_\infty)=p_{i_k}$ which satisfy
\[
\lim_{k\to \infty} \Vol_{F_k^*g_{i_k}}(E)=\mu(E)
\]
for any precompact Borel set $E\subset M_\infty$, and
\[
(M_{i_k}, d_{g_{i_k}},p_{i_k}) \xrightarrow{\text{pointed Gromov-Hausdorff}}(M_\infty,d_0, x_\infty),
\]
where the $F_k$'s may serve as the approximating functions.
\end{thm}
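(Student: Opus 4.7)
The plan is to apply Theorem \ref{shortimeexistencemain} to each member of the sequence, extract a smooth Ricci flow on the positive time slab via Cheeger-Gromov-Hamilton compactness, and then pass to $t = 0$ using quantitative distance and volume estimates to recover the limit metric-measure structure.

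First I would apply Theorem \ref{shortimeexistencemain} to each $(M_i, g_i)$ to obtain Ricci flows $g_i(t)$ on $M_i \times [0,T]$ satisfying the estimates \eqref{shorttimeexistenceconclusions} with constants $C_0, T$ uniform in $i$. On each slice $t \in (0, T]$, the bounds $|\Rm|_{g_i(t)} \leq C_0 \ep/t$ and $\inj_{g_i(t)} \geq C_0^{-1}\sqrt{t}$ are uniform in $i$, so Hamilton's compactness theorem, diagonalized over a sequence of times $t_j \to 0$, yields a subsequence $i_k$, a smooth pointed manifold $(M_\infty, x_\infty)$, an exhausting sequence $\Omega_k \subset M_\infty$ of open sets, and diffeomorphisms $F_k: \Omega_k \to F_k(\Omega_k) \subset M_{i_k}$ with $F_k(x_\infty) = p_{i_k}$ such that $F_k^* g_{i_k}(t) \to g(t)$ in $C^\infty_{loc}$ on $M_\infty \times (0, T]$ for some complete Ricci flow $g(t)$; the estimates \eqref{shorttimeexistenceconclusions} pass to $g(t)$ by smooth convergence, with the curvature concentration interpreted via the limit distance $d_0$ constructed next.

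Next I would construct the initial distance $d_0$ and verify pointed Gromov-Hausdorff convergence. The Ricci bound $|\Rc|_{g(t)} \leq C_0\ep/t$ (and similarly for each $g_{i_k}(t)$, uniformly in $k$) gives, via the Shrinking Balls Lemma together with its upper partner, that $t \mapsto d_{g(t)}$ is uniformly Cauchy as $t \to 0$ at a rate $\lesssim \sqrt{\ep t}$; define $d_0 := \lim_{t \to 0^+} d_{g(t)}$. Decomposing
\[
|F_k^* d_{g_{i_k}(0)} - d_0| \leq |F_k^* d_{g_{i_k}(0)} - F_k^* d_{g_{i_k}(t)}| + |F_k^* d_{g_{i_k}(t)} - d_{g(t)}| + |d_{g(t)} - d_0|,
\]
the first and third terms can be made small by choosing $t$ small (uniformly in $k$), and the middle term by sending $k \to \infty$ at fixed $t$ using $C^\infty_{loc}$ convergence. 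This yields locally uniform convergence of $F_k^* d_{g_{i_k}(0)}$ to $d_0$; the $F_k$ then serve as pointed Gromov-Hausdorff approximating maps.

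Finally I would construct $\mu$ and verify the weak emergence. The crucial estimate is that on any geodesic ball $B = B_{g_i(0)}(x, 1/16)$, the curvature concentration \eqref{shorttimeexistenceconclusions} together with H\"older's inequality yield
\[
\left|\frac{d}{dt}\Vol_{g_i(t)}(B)\right| = \left|\int_B R_{g_i(t)} \, dV_{g_i(t)}\right| \leq C\ep \cdot \Vol_{g_i(t)}(B)^{(n-2)/n},
\]
whence a short ODE argument for $\Vol_{g_i(t)}(B)^{2/n}$, anchored at $t = 0$ via the upper volume hypothesis, gives $|\partial_t \Vol_{g_i(t)}(E)| \leq C'(E)\ep$ uniformly in $i$ for any precompact Borel $E$. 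Thus $\Vol_{g_i(t)}(E)$ is uniformly Lipschitz in $t \in [0, T]$, and combined with the $C^\infty_{loc}$ convergence on positive time slices, a two-parameter limit argument identifies
\[
\lim_{k \to \infty} \Vol_{F_k^* g_{i_k}(0)}(E) = \lim_{t \to 0^+} \Vol_{g(t)}(E),
\]
which I would take as the definition of $\mu(E)$. The local uniform upper volume bound makes $\mu$ a Radon measure, and both required convergences hold by construction. The main obstacle is precisely this synthesis of the two limits (in $k$ and in $t$), and its resolution rests on the quantitative curvature concentration in \eqref{shorttimeexistenceconclusions}, not merely on the pointwise $|\Rm| \leq C_0\ep/t$ bound, since the latter alone does not provide a time-integrable control on the scalar curvature.
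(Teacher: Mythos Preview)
Your overall strategy---run the flows, take a Hamilton limit on $(0,T]$, then push estimates back to $t=0$---matches the paper. The volume/measure part is essentially right and close to the paper's argument: the paper packages your H\"older-plus-ODE step as a ``Volume Perturbation Lemma'' (Lemma~\ref{volumeperturbation} and Lemma~\ref{VolPertwithThm}), with the extra bookkeeping that the covering number of a precompact $E\subset M_\infty$ by balls of radius $\tfrac{1}{16}$ must be transferred uniformly to the $M_{i_k}$ via the $F_k$'s at a fixed positive time, and that sets with empty interior are handled by a separate limiting argument.

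The genuine gap is in your construction of $d_0$. You assert that the bound $|\Rc|_{g(t)}\leq C_0\ep/t$, via ``SBL together with its upper partner'', makes $t\mapsto d_{g(t)}$ uniformly Cauchy at rate $\sqrt{\ep t}$. Only one direction works this way: SBL (using $\Rc\leq C_0\ep/t$ on balls of radius $\sim\sqrt t$) gives $d_{g(s)}\leq d_{g(t)}+\beta\sqrt{C_0\ep t}$ for $0<s\leq t$. For the expanding direction one only has $\Rc\geq -C_0\ep/t$, which integrates to $d_{g(t)}\leq (t/s)^{(n-1)C_0\ep}d_{g(s)}$; this blows up as $s\to 0$ and does not yield a Cauchy family. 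The paper flags exactly this point: the Remark after Theorem~\ref{smoothingcorollary} and Question~2 state that a continuous rate of expansion at $t=0$ is \emph{not} available with the present Expanding Balls Lemma, and that $d_{g(t)}\to d_0$ is expected but unproved. Consequently your definition $d_0:=\lim_{t\to 0}d_{g(t)}$ is not justified, and the first term in your three-term decomposition, $|F_k^*d_{g_{i_k}(0)}-F_k^*d_{g_{i_k}(t)}|$, cannot be made small uniformly in $k$ as $t\to 0$ by the argument you give.

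The paper's workaround is to prove a uniform local bi-H\"older estimate (Claim~\ref{GHcorclaim}),
\[
C^{-1}d_{g_i(t)}(x,y)^{\frac{1}{1-2(n-1)\alpha}}\leq d_{g_i}(x,y)\leq C\,d_{g_i(t)}(x,y)^{\frac{1}{1+2(n-1)\alpha}},
\]
using SBL for the right inequality and the modified EBL (Lemma~\ref{EBL}) applied at suitable scales for the left. This gives equicontinuity of $\{F_k^*d_{g_{i_k}}\}$ relative to the fixed smooth metric $d_{g(T)}$, so $d_0$ is extracted by Arzel\`a--Ascoli as a subsequential limit of the \emph{initial} distances $F_k^*d_{g_{i_k}}$, not as a time limit. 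The bi-H\"older estimate then passes to the limit to show $d_0$ generates the manifold topology and is complete.
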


\begin{remark}
Although the convergence of the distance function $d_{g(t)}$ back to the `time 0' data $d_0$ is not guaranteed by Theorem \ref{smoothingcorollary}, it is certainly expected that this is also true. The shortfall here is due to the fact our version of the Expanding Balls Lemma (Lemma \ref{EBL}) does not give a continuous rate of expansion at the initial time. See also Question 2 below.
\end{remark}

The second application we present is for manifolds $(M,g)$ that satisfy the hypotheses of Theorem \ref{shortimeexistencemain} on \emph{all} scales, thus allowing us to apply the theorem to increasingly extreme blow-downs to obtain long-time Ricci flow existence with the scale invariant properties \eqref{shorttimeexistenceconclusions} holding for \emph{all} times. This allows us to conclude topological rigidity and global volume bounds on the initial manifold. 

\vspace{5pt}

\begin{thm}\label{longtimecorollary}
For all $n\geq 4, A, \mathfrak{v}_0 >0$, there exists $\sigma>0$ depending on $n,A,\mathfrak{v}_0$ such that the following holds. For any $\ep\leq \sigma$, we denote $\sM_\ep$ to be the family of complete noncompact (not necessarily bounded curvature) Riemannian manifolds $(M^n,g)$ satisfying:
\begin{enumerate}[(i)]
\item $\Vol_g B_g(x,r)\leq \mathfrak{v}_0 r^n$ for all $x\in M, r>0$; \label{longtime1}
\item $\bar\nu(M,g)\geq -A$;  \label{longtime2}
\item $\left(\int_M |\Rm|_g^{\frac{n}{2}}\,dV_g\right)^{\frac{2}{n}}\leq \ep$; and  \label{longtime3}
\item $\inf_{x\in M}\Rc_g(x)>-\infty$.  \label{longtime4}
\end{enumerate}
Then each of the following rigidity properties holds:
\begin{enumerate}[(a)]
\item If $(M^n,g)\in \sM_\sigma$, then $M^n$ is homeomorphic to $\R^n$, and if $n\neq 4$ then the homeomorphism can be taken to be a diffeomorphism.\label{longtimecorconc1}
\item For any $(M,g)\in \sM_\sigma$, the volume growth of $(M,g)$ at infinity is at least that of Euclidean space, uniformly with respect to the base point. More precisely,\label{longtimecorconc2}
\[
\liminf_{r\to \infty, x\in M} \frac{\Vol_g B_g(x,r)}{r^n}\geq \omega_n.
\]
\item By restricting $\ep$ smaller, we may assume that \textbf{all} volume ratios are uniformly close (from below) to that of Euclidean space, uniformly with respect to the manifold, base point, and scale. More precisely,\label{longtimecorconc3}
\[
\liminf_{\ep\to 0} \left\{\frac{\Vol_g B_g(x,r)}{r^n}\; \text{ where }  (M,g)\in \sM_\ep, x\in M, r>0\right\}\geq \omega_n .
\]
\end{enumerate}
\end{thm}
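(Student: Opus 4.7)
The plan is to bootstrap Theorem \ref{shortimeexistencemain} to a long-time Ricci flow on $(M, g)$ using the scale-invariance of the hypotheses defining $\sM_\ep$, and then to read the rigidity statements (a), (b), (c) off of the long-time flow together with Theorem \ref{smoothingcorollary}. The hypotheses (i)--(iv) are all scale-invariant, so for every $\lambda > 0$ the rescaled metric $\lambda^{-2}g$ lies in $\sM_\ep$ and in particular satisfies the local hypotheses of Theorem \ref{shortimeexistencemain}, producing a Ricci flow $h_\lambda$ on $M \times [0, T]$ from $\lambda^{-2}g$. Parabolically rescaling gives $g_\lambda(s) := \lambda^2 h_\lambda(\lambda^{-2}s)$, a Ricci flow on $M \times [0, T\lambda^2]$ starting from $g$ with the scale-invariant estimates of \eqref{shorttimeexistenceconclusions}. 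Uniqueness of Ricci flow with $|\Rm|(t) \leq C/t$ (Kotschwar) makes these agree on overlaps, and sending $\lambda \to \infty$ yields a complete Ricci flow $g(t)$ on $M \times [0, \infty)$ with $g(0) = g$, $|\Rm|_{g(t)} \leq C_0\ep/t$, and $\inj_{g(t)} \geq C_0^{-1}\sqrt{t}$ for all $t > 0$.

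For conclusion (c), I would argue by contradiction. If (c) fails, there exist $\delta > 0$ and sequences $\ep_i \to 0$, $(M_i, g_i) \in \sM_{\ep_i}$, $x_i \in M_i$, $r_i > 0$ with $\Vol_{g_i}B_{g_i}(x_i, r_i)/r_i^n < \omega_n - \delta$. After rescaling $g_i \mapsto r_i^{-2}g_i$ (still in $\sM_{\ep_i}$ by scale-invariance), we may assume $r_i = 1$. Theorem \ref{smoothingcorollary} then produces a metric-measure limit $(M_\infty, d_0, \mu, x_\infty)$ together with a complete Ricci flow $g(t)$ on $M_\infty \times (0, T]$ obeying $|\Rm|_{g(t)} \leq C_0\ep_i/t \to 0$; thus the limit flow is stationary and flat, equal to a fixed flat metric $g^*$, and $\mu = \Vol_{g^*}$. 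The scale-invariant injectivity radius bound and the inherited global volume upper bound $\mu(B(\cdot, r)) \leq \mathfrak{v}_0 r^n$ force $(M_\infty, g^*) \cong (\R^n, g_{\mathrm{Eucl}})$ by the Bieberbach classification of complete flat manifolds. Combining the Shrinking Balls Lemma (to compare $d_0$ with $d_{g^*}$) with the measure convergence $\Vol_{g(t)} \to \mu$ then gives $\mu(B_{d_0}(x_\infty, 1)) = \omega_n$, contradicting the upper bound $\omega_n - \delta$ inherited from the assumption.

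For (b), the long-time flow at time $r^2$ has curvature $\leq C_0\ep/r^2$ and injectivity radius $\geq r/C_0$ at scale $r$; combining the Shrinking Balls Lemma, the volume-element bounds coming from $|\Rc|_{g(t)} \leq C_0\ep/t$, and a Cheeger-Gromov blow-down analysis of the rescalings $\tilde g_r(s):=r^{-2}g(r^2 s)$ (using that the $\bar\nu$-bound is preserved by the flow) relates the time-$0$ volume ratio at scale $r$ to the near-flat geometry of $(M, g(r^2))$, yielding $\liminf_{r\to\infty} \Vol_g B_g(x, r)/r^n \geq \omega_n$ uniformly in $x$. For (a), the long-time flow combined with (c) exhibits $(M, g(t))$ for large $t$ as uniformly almost-flat, non-collapsed, and of near-Euclidean volume at every scale; an $\epsilon$-regularity / Cheeger-Fukaya-Gromov argument then identifies $M$ topologically with $\R^n$, with the restriction $n \neq 4$ for the smooth version reflecting the existence of exotic smooth structures on $\R^4$. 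The main technical obstacle lies in the proof of (c): Theorem \ref{smoothingcorollary} gives measure convergence $\Vol_{g(t)} \to \mu$ but not $d_{g(t)} \to d_0$, so closing the contradiction at the correct scale requires careful coordination of the Gromov-Hausdorff convergence of $d_{g_i}$ with the measure convergence and the flatness of the limit flow.
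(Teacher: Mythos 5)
Your overall strategy (scale invariance of (i)--(iv) plus Theorem \ref{shortimeexistencemain} applied to blow-downs to get a long-time flow with the scale-invariant estimates, then reading the rigidity off the flow) is the same as the paper's, but the execution has genuine gaps. A minor one first: Kotschwar's uniqueness theorem requires complete flows of \emph{bounded} curvature, which is exactly what you do not have here (curvature like $C/t$ and possibly unbounded initial curvature), so "gluing" the flows $g_\lambda$ by uniqueness is not justified; the paper instead extracts a $C^\infty_{loc}$ limit of the flows coming from $R^{-2}g$, exactly as in the proof of Theorem \ref{shortimeexistencemain}, which also hands you the global Sobolev inequality and the all-scale volume-ratio lower bound \eqref{uniformlypositiveVR} that are needed later.

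The serious gaps are in (a)--(c). For (b), with $\ep=\sigma$ \emph{fixed}, the blow-downs only satisfy $|\Rm|_{g(t)}\le C_0\sigma/t$, so their limit is merely "near-flat" and your argument can at best give $\liminf\ge (1-c(n,\sigma))\omega_n$, not the sharp constant $\omega_n$; the paper's key ingredient, which you omit, is to shrink $\sigma$ so that \cite[Theorem 1.2]{CM} applies to $g(1)$ and upgrades the decay to $t\sup_M|\Rm|_{g(t)}\to 0$ (see \eqref{fasterthan1/tdecay}), after which the blow-down limit is static flat and is identified with $\R^n$ using \eqref{uniformlypositiveVR}. For (c), your identification of the Theorem \ref{smoothingcorollary} limit with Euclidean space does not follow from what you cite: that flow lives only on $(0,T]$, so flatness plus $\inj_{g(t)}\ge C_0^{-1}\sqrt t$ only gives a definite positive injectivity radius, and a flat quotient such as $\R^{n-1}\times S^1$ of moderate girth satisfies both this and the volume upper bound, so Bieberbach does not force $\R^n$; then $\mu(B_{d_0}(x_\infty,1))$ need not equal $\omega_n$ and the contradiction does not close -- on top of the $d_{g(t)}\not\to d_0$ coordination problem you yourself flag. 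The paper avoids the metric-measure limit entirely: it proves Claim \ref{partcintermediateclaim} ($\Vol_{g(t)}B_{g(t)}(x,\sqrt t)\ge(1-\delta)\omega_n t^{n/2}$, reduced to the bounded-curvature case via \cite[Theorem 4.2]{CCL}) and transfers it to $t=0$ using only the Shrinking Balls Lemma and the Volume Perturbation Lemma \ref{volumeperturbation}, which requires only the one-sided distance control that is actually available. Finally, for (a) the "$\epsilon$-regularity / Cheeger--Fukaya--Gromov" step is a gesture, not an argument, and (c) alone (a volume lower bound for $g$, with no Ricci lower bound) does not yield topology; the paper's proof is concrete: Lemma \ref{EBL} places $B_{g_R(0)}(x_0,1)$ inside a $g_R(T)$-ball of radius below the injectivity-radius bound $C_0^{-1}\sqrt T$, so each $B_{g(0)}(x_0,R)$ lies in an open $n$-cell, and $M$ is then a monotone union of open cells, hence homeomorphic to $\R^n$ by Brown's theorem (diffeomorphic for $n\ne 4$).
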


\vspace{5pt}

In addition to the obvious question of whether the ineffective lower Ricci bound assumption \eqref{longtime4} can be removed from Theorem \ref{longtimecorollary}, we also present two additional questions. 

\vspace{5pt}

\textbf{Question 1:} Is the upper bound on volume ratios assumption \eqref{longtime1} also redundant in the statement of Theorem \ref{longtimecorollary}?

\vspace{5pt}

For example, if $g$ is assumed to have bounded curvature, then conclusions \eqref{longtimecorconc2} and \eqref{longtimecorconc3} are still true even with assumption \eqref{longtime1} removed, which follows from their methods of proof and the main result of \cite{CM}. Moreover, the main result of \cite{CM} includes a topological rigidity statement stronger than conclusion \eqref{longtimecorconc1}, assuming an additional mild hypothesis on the curvature but without the volume bound assumption. The primary issue that we face without the bounded curvature assumption is that our method to show that the curvature concentration remains bounded along the flow uses the Expanding Balls Lemma \ref{EBL} (EBL), which seems to require an a priori upper bound on volume ratios. 

\vspace{5pt}

Our other question is also related to an issue with EBL. Specifically, if we had a version of this lemma that gave a \emph{continuous} rate of expansion akin to that of the Shrinking Balls Lemma (instead of simply the conclusion that $B_{g(0)}(x,1)\subset B_{g(t)}(x,R)$), then we would be able to prove that the analogous almost-Euclidean upper bound conclusions of \eqref{longtimecorconc2} and \eqref{longtimecorconc3} would also be true. In other words, we would have an affirmative answer to the following. 

\vspace{5pt}

\textbf{Question 2:} With the same notation of $\sM_\ep$ from Theorem \ref{longtimecorollary}, is it true that 
\[
\limsup_{r\to \infty, x\in M} \frac{\Vol_g B_g(x,r)}{r^n}\leq \omega_n,
\]
for any $(M,g)\in \sM_\sigma$ and that 
\[
\limsup_{\ep\to 0} \left\{\frac{\Vol_g B_g(x,r)}{r^n}\; \text{ where }  (M,g)\in \sM_\ep, x\in M, r>0\right\}\leq \omega_n ?
\]

\vspace{5pt}

The organization of this paper is as follows. In section \ref{distancesection}, we prove a modified version of the Expanding Balls Lemma, which substitutes the bound on the scalar curvature for a bound on the curvature concentration along the flow. In section \ref{proofssection}, we prove Theorems \ref{shortimeexistencemain}, \ref{smoothingcorollary}, \ref{longtimecorollary}, each in their own subsection. Some of these proofs are similar to the methods in \cite{CCL}, \cite{CHL}, \cite{CM}, \cite{Martens}, in which case we will only describe the necessary changes in this setting. 

\section{Modified distance distortion}\label{distancesection}

First, we recall the well-known Shrinking Balls Lemma due to Simon-Topping, which dictates how quickly distances can contract under Ricci flows with scale-invariant curvature bounds: 
\[
\sup_{y\in B_{g(0)}(x, r)}|\Rm|_{g(t)}(y)\leq c_0/t \;\Longrightarrow \; B_{g(t)}(x, r-\beta \sqrt{c_0 t})\subset B_{g(0)}(x, r) \tag{SBL}
\]
Here $\beta=\beta(n)$ is a dimensional constant, which is now fixed notation in the remainder of the paper. We direct the reader to {\cite[Corollary 3.3]{ST}} for a precise statement. \\

Obtaining control on how quickly distances can expand under the flow usually requires control on the negative part of the Ricci or scalar curvature along the flow. In our setting, however, we are able to substitute this $L^\infty$ curvature assumption for control on the local curvature concentration along the flow. We begin with an elementary lemma that allows us to control the rate at which volumes are perturbed, which will be useful in its own right. 

\vspace{5pt}

\begin{lem}[Volume Perturbation]\label{volumeperturbation}
For any $n \in \mathbb{N}, v, \sigma, r > 0$, and any pointed (not assumed complete) Ricci flow $(M^n, g(t),x_0)$, $t\in [a,b]$ for some $a<b$, let $\Omega\subset\subset M$ be any precompact set satisfying:
\begin{enumerate}[(a)]
    \item $B_{g(t)}(x_0, r)\subset \Omega$ for each $t\in [a,b]$;\label{volumepertcontainment}
    \item $\left(\int_{\Omega}|\sR_{g(t)}|^{\frac{n}{2}}\,dV_{g(t)}\right)^{\frac{2}{n}} \leq \sigma$ for all $t\in [a,b]$;\label{volumepert1}
    \item $\Vol_{g(t)}B_{g(t)}(x_0,r) \geq v r^n$ for all $t\in [a,b]$.\label{volumepert4}
\end{enumerate}
Then whenever $a\leq s\leq t\leq  b$, there holds 
\[
e^{- \sigma v^{-\frac{2}{n}}r^{-2} (t-s)}\Vol_{g(s)}\Omega\leq \Vol_{g(t)}\Omega\leq e^{ \sigma v^{-\frac{2}{n}}r^{-2}(t-s)}\Vol_{g(s)}\Omega.
\]
\end{lem}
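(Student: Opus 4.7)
The natural starting point is the evolution of the volume form under Ricci flow, namely $\frac{\partial}{\partial t} dV_{g(t)} = -\sR_{g(t)}\, dV_{g(t)}$. Differentiating $\Vol_{g(t)}(\Omega)$ and working with its logarithm, we get
\[
\frac{d}{dt}\log \Vol_{g(t)}(\Omega) = \frac{-\int_{\Omega} \sR_{g(t)}\, dV_{g(t)}}{\Vol_{g(t)}(\Omega)}.
\]
The goal is then to show that the absolute value of the right-hand side is bounded by $\sigma v^{-2/n} r^{-2}$ uniformly in $t$, after which integrating in $t$ over $[s,t]$ yields the two-sided estimate.

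To bound the numerator, I would apply H\"older's inequality on $\Omega$ with conjugate exponents $n/2$ and $n/(n-2)$, giving
\[
\left|\int_\Omega \sR_{g(t)}\, dV_{g(t)}\right| \leq \left(\int_\Omega |\sR_{g(t)}|^{n/2}\, dV_{g(t)}\right)^{2/n} \bigl(\Vol_{g(t)}(\Omega)\bigr)^{1-2/n} \leq \sigma \bigl(\Vol_{g(t)}(\Omega)\bigr)^{1-2/n},
\]
using hypothesis \eqref{volumepert1}. Dividing through by $\Vol_{g(t)}(\Omega)$ yields
\[
\left|\frac{d}{dt}\log \Vol_{g(t)}(\Omega)\right| \leq \frac{\sigma}{\Vol_{g(t)}(\Omega)^{2/n}}.
\]

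The final ingredient is the lower bound on $\Vol_{g(t)}(\Omega)^{2/n}$. By the containment hypothesis \eqref{volumepertcontainment}, $\Vol_{g(t)}(\Omega) \geq \Vol_{g(t)} B_{g(t)}(x_0, r)$, and hypothesis \eqref{volumepert4} then gives $\Vol_{g(t)}(\Omega)^{2/n} \geq v^{2/n} r^2$. Combining,
\[
\left|\frac{d}{dt}\log \Vol_{g(t)}(\Omega)\right| \leq \sigma v^{-2/n} r^{-2}.
\]
Integrating this differential inequality on $[s,t]\subset[a,b]$ produces precisely the two-sided bound in the conclusion.

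There is no real obstacle here; the only thing to be a bit careful about is justifying the differentiation under the integral sign on the (possibly non-compact but precompact) domain $\Omega$, which follows from smoothness of the flow on a neighborhood of $\overline{\Omega}\times[a,b]$ (so that $\sR_{g(t)}$ is bounded there). The scale-invariance of the bound $\sigma v^{-2/n} r^{-2}$ is a useful sanity check: rescaling $g\mapsto \lambda g$ and $t\mapsto \lambda t$ leaves the exponent invariant, as it should.
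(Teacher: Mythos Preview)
Your proof is correct and follows essentially the same route as the paper's: differentiate the volume via $\partial_t\, dV_{g(t)} = -\sR_{g(t)}\, dV_{g(t)}$, apply H\"older with exponents $n/2$ and $n/(n-2)$ together with hypothesis \eqref{volumepert1}, and then use the containment \eqref{volumepertcontainment} plus the volume lower bound \eqref{volumepert4} to control $\Vol_{g(t)}(\Omega)^{-2/n}$ before integrating. The only cosmetic difference is that you phrase the ODE step in terms of $\log \Vol_{g(t)}(\Omega)$ while the paper bounds $\partial_t \Vol_{g(t)}(\Omega)$ directly and invokes comparison.
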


\begin{remark}\label{volpertremark}
Our most common application of Lemma \ref{volumeperturbation} will be with $a=s=0$ and $\Omega=B_{g(0)}(x_0, 2r)$. In this setting, if we further assume that the flow enjoys an $\alpha/t$ curvature bound, then we can see (by SBL) that condition \eqref{volumepertcontainment} can be replaced by the assumption that $B_{g(0)}(x_0, 2r)\subset \Omega$ as long as we also restrict $b\leq \frac{r^2}{\beta^2 \alpha}$. 
\end{remark}

\begin{proof}[Proof of Lemma \ref{volumeperturbation}]
By the evolution of the volume form $\partial_t dV_{g(t)}=-\sR_{g(t)}dV_{g(t)}$, the assumed smallness of integral scalar curvature \eqref{volumepert1} and the lower bound of volumes at time $t$ in \eqref{volumepert4}, we obtain
\ba
\partial_t \Vol_{g(t)}(\Omega)&\leq \left(\int_\Omega |\sR_{g(t)}|^{\frac{n}{2}}dV_{g(t)}\right)^{\frac{2}{n}}[\Vol_{g(t)}(\Omega)]^{\frac{n-2}{n}}\\&
\leq \sigma [\Vol_{g(t)}B_{g(t)}(x_0, r)]^{-\frac{2}{n}}\Vol_{g(t)}(\Omega)\\&\leq 
\sigma (vr^n)^{-\frac{2}{n}}\Vol_{g(t)}(\Omega) .
\end{align*}
Therefore by comparison, we obtain $\Vol_{g(t)}\Omega\leq e^{\sigma v^{-\frac{2}{n}}r^{-2}(t-s)} \Vol_{g(s)}\Omega$ for any $0\leq s\leq t\leq T$. The other inequality follows from an identical argument.
\end{proof}

\vspace{5pt}

This volume perturbation estimate allows us to remove the assumption of a lower bound on scalar curvature in the version of EBL from \cite{Martens}. 

\vspace{5pt}

\begin{lem}[Expanding Balls]\label{EBL}
For any $n \in \mathbb{N}$ and any $\alpha, v, \sigma > 0$, there exists $\widehat{T}(n, \alpha, v, \sigma) > 0$ such that the following holds. Let $(M^n, g(t), x_0)$, $t\in [0,T]$ be a pointed Ricci flow (not assumed complete) such that for some $V>0$ there holds: 
\begin{enumerate}[(a)]
	\item $B_{g(t)}(x_0, 2^{n+2} Vv\inv)\subset \subset M$ for all $t\in [0,T]$;
    \item $\left(\int_{B_{g(0)}(x_0,2)}|\sR_{g(t)}|^{\frac{n}{2}}\,dV_{g(t)}\right)^{\frac{2}{n}} \leq \sigma$ for all $t\in [0,T]$;\label{eblcond1}
    \item $\sup_{x\in B_{g(0)}(x_0,2)}|\Rm|_{g(t)}(x) \leq \alpha t^{-1}$ for all $t\in (0,T]$;
    \item $\Vol_{g(0)}B_{g(0)}(x_0, 2)\leq V $; and 
    \item $\Vol_{g(t)}B_{g(t)}(x,r) \geq v  r^n$ whenever $x\in B_{g(0)}(x_0, 1)$, $r\in (0,\frac{1}{2}]$, and $t\in (0,T]$.\label{eblcond4}
\end{enumerate}
Then for all $t \in [0, \min\{T, \widehat{T}\}]$, we have $B_{g(0)}(x_0, 1)  \subset B_{g(t)}(x_0, 2^{n+2} Vv\inv)$.
\end{lem}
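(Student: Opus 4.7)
I would argue by contradiction via a ``first bad time'' continuity argument, pitting an upper bound on $\Vol_{g(t)}B_{g(0)}(x_0,2)$ produced by Lemma~\ref{volumeperturbation} against a matching lower bound produced by packing $\sim R$ disjoint $g(t)$-balls centred in $B_{g(0)}(x_0, 1)$, where $R := 2^{n+2}Vv^{-1}$. If the conclusion fails, the continuity of $t \mapsto d_{g(t)}(x_0, \cdot)$ on the compact set $\overline{B_{g(0)}(x_0, 1)} \subset B_{g(t)}(x_0, R) \subset\subset M$ supplies a first bad time $t^* \in (0, \min\{T, \widehat T\}]$ and a point $y^* \in \overline{B_{g(0)}(x_0, 1)}$ with $d_{g(t^*)}(x_0, y^*) = R$.

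For the upper bound, I would apply Lemma~\ref{volumeperturbation} with $\Omega = B_{g(0)}(x_0, 2)$ and $r = 1/2$ over the interval $[0, t^*]$: SBL together with hypothesis (c) guarantees the containment $B_{g(t)}(x_0, 1/2) \subset \Omega$ once $\widehat T$ is chosen small enough, hypothesis (b) supplies the integral scalar curvature bound, and hypothesis (e) applied at $x_0$ supplies the volume lower bound. After further shrinking $\widehat T$ so that $e^{4\sigma v^{-2/n}\widehat T} < 2$, this yields $\Vol_{g(t^*)}B_{g(0)}(x_0, 2) < 2V$.

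For the lower bound, I would exploit $g(0)$-path-connectedness of $B_{g(0)}(x_0, 1)$: after perturbing $y^*$ slightly inward, choose a $g(0)$-continuous path $\gamma \subset B_{g(0)}(x_0, 1)$ from $x_0$ to this perturbed point. The composition $s \mapsto d_{g(t^*)}(x_0, \gamma(s))$ is continuous, so by the IVT it attains every integer value in $\{0, 1, \dots, \lfloor R \rfloor\}$, producing points $z_0, \dots, z_{\lfloor R \rfloor} \in B_{g(0)}(x_0, 1)$ with $d_{g(t^*)}(x_0, z_i) = i$. The reverse triangle inequality forces $d_{g(t^*)}(z_i, z_j) \geq 1$ for $i \neq j$, so the balls $B_{g(t^*)}(z_i, 1/2)$ are pairwise disjoint, each of volume $\geq v/2^n$ by (e). To place each ball in $B_{g(0)}(x_0, 2)$, note that $B_{g(0)}(z_i, 1) \subset B_{g(0)}(x_0, 2)$ by triangle inequality, so (c) holds on $B_{g(0)}(z_i, 1)$; SBL centred at $z_i$ then gives $B_{g(t^*)}(z_i, 1/2) \subset B_{g(0)}(z_i, 1) \subset B_{g(0)}(x_0, 2)$ once $\widehat T$ is shrunk so $\beta\sqrt{\alpha\widehat T} \leq 1/2$. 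Summing volumes produces $\Vol_{g(t^*)}B_{g(0)}(x_0, 2) \geq (\lfloor R\rfloor + 1)\cdot v/2^n \geq R\cdot v/2^n = 4V$, contradicting the upper bound. (The fact that $R \geq 4$ so the argument is non-vacuous follows from (e) at $x_0$ and continuity, which forces $V \geq v/2^n$.)

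The main obstacle is the packing step: I need roughly $R$ disjoint balls whose centres lie in $B_{g(0)}(x_0, 1)$ (so that (e) produces the volume lower bound) and whose enlargements all sit in $B_{g(0)}(x_0, 2)$ (so that the upper bound from Lemma~\ref{volumeperturbation} applies). The first is handled by the IVT on a path lying entirely in $B_{g(0)}(x_0, 1)$, which is where the $g(0)$-topology of the initial ball plays an essential role; the second is handled by the locality of SBL, applied at each $z_i$ separately. This is precisely the step at which the pointwise scalar curvature lower bound used in the analogous lemma of \cite{Martens} is replaced by its integral counterpart, passed through Lemma~\ref{volumeperturbation}.
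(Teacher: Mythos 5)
Your proposal is correct and follows essentially the same route as the paper: the paper simply cites the packing/IVT skeleton from \cite{Martens} (choosing points along a $g(0)$-path in $B_{g(0)}(x_0,1)$ with unit $g(t)$-separation, disjoint balls placed in $B_{g(0)}(x_0,2)$ via SBL, volumes bounded below by hypothesis (e)), and its only new content is exactly your key step --- replacing the scalar-curvature-based doubling bound on $\Vol_{g(t)}B_{g(0)}(x_0,2)$ by Lemma \ref{volumeperturbation} with the same two restrictions on $\widehat T$ (one for $\beta\sqrt{\alpha \widehat T}$, one making the exponential factor at most $2$). Your ``first bad time'' framing is an unnecessary flourish (one can run the packing argument directly at any failure time, avoiding the delicate time-continuity of $d_{g(t)}$ on an incomplete flow), but this does not affect correctness.
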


\begin{proof}
The proof is nearly identical to the argument in \cite{Martens}. The only changes are as follows. First, the maximal time interval in our setting will be
\[
\widehat T= \min\left\{\frac{v^{\frac{2}{n}}\log 2 }{\sigma } , \frac{1}{4\beta^2 \alpha}\right\}.
\]
Now the only setting in which the lower bound of scalar curvature was used in {\cite[Theorem 3.2]{Martens}} was to obtain the volume control 
\[
\Vol_{g(t)}(B_{g(0)}(x_0, 2))\leq 2 \Vol_{g(0)}(B_{g(0)}(x_0, 2))
\]
for all $t\in [0,\min\{T, \widehat T\}]$. For us, this implication follows immediately from Lemma \ref{volumeperturbation} by our restrictions on the size of $\widehat T$ and Remark \ref{volpertremark}.
\end{proof}

\section{Proof of results}\label{proofssection}

\subsection{Proof of Theorem \ref{shortimeexistencemain}}\hfill

\vspace{5pt}

We claim that it suffices to show that the theorem holds for complete \emph{bounded curvature} metrics, albeit for slightly different scales (cf. Claim \ref{shorttimeclaim}). The reason why this is sufficient is as follows. We know by {\cite[Theorem 2.1]{CHL}} that there exists a smooth proper function $\rho: M\to \R$ which, for some constant $C$ depending on the metric $g$, satisfies the following conditions uniformly for all $x\in M$:
\be\label{unboundedcurvdistancelikefunctionproperties}
\begin{cases}
&C\inv d_{g}(x_0,x)-C \leq \rho(x)\leq d_{g}(x_0, x) ;\\
& |\nabla \rho(x)|_g^2+|\Delta^g \rho(x)|\leq C; \text{ and }\\
&\int_{B_{g}(x,1)} |\nabla^2 \rho|_g^n \,dV_{g} \leq C .
\end{cases}
\ee
We emphasize again here that the ineffective lower Ricci curvature bound \eqref{shorttimeexistencehyp4} is only used to show that \eqref{unboundedcurvdistancelikefunctionproperties} holds. Once we know that \eqref{unboundedcurvdistancelikefunctionproperties} holds, by the proof of {\cite[Theorem 1.1]{CHL}}, we can say that there exists a sequence of exhausting precompact open sets $U_j\subset M$ and complete bounded curvature metrics $g_j$ on $U_j$ that satisfy all the following:
\begin{itemize}
\item For any compact $\Omega\subset M$, $g_j=g$ on $\Omega$ for all $j$ sufficiently large; 
\item $\Vol_{g_j} B_{g_j}(x,r)\leq 2\mathfrak{v}_0 r^n$ for all $r\leq 1/2$;
\item $\bar\nu(B_{g_j}(x, 1/2), g_j, \tau)\geq -\tilde A(n,A,\tau)$; and
\item $\left(\int_{B_{g_j}(x,1)} |\Rm|_{g_j}^{\frac{n}{2}}\,dV_{g_j}\right)^{\frac{2}{n}}\leq 4\ep$.
\end{itemize}
Define the sequence of blown-up metrics $\tilde g_j=16^2g_j$, which (in particular) satisfies each of the following conditions uniformly with respect to $j\in \N$ and $x\in U_j$:
\begin{enumerate}[(I)]
\item $\Vol_{\tilde g_j} B_{\tilde g_j}(x,r)\leq 2\mathfrak{v}_0 r^n$ for all $r\leq 8$;\label{shorttimeboundedcurvvolume}
\item $\bar\nu(B_{\tilde g_j}(x, 4), \tilde g_j, \tau)\geq -\tilde A$; and
\item $\left(\int_{B_{\tilde g_j}(x,4)} |\Rm|_{\tilde g_j}^{\frac{n}{2}}\,dV_{\tilde g_j}\right)^{\frac{2}{n}}\leq 4\ep$.
\end{enumerate}

Now we claim that the $\tilde g_j$ satisfy a slightly modified version of Theorem \ref{shortimeexistencemain}. 

\begin{claim}\label{shorttimeclaim}
There exists $\widetilde T, C_0, \sigma$ depending only on $n,A, \tau, \mathfrak{v}_0$ such that the complete bounded curvature Ricci flow $\tilde g_j(t)$ with $\tilde g_j(0)=\tilde g_j$ exists at least until time $\widetilde T$ and satisfies
\[
\begin{cases}
&|\Rm|_{\tilde g_j(t)}(x)\leq C_0 \ep t\inv,\\
&\inj_{\tilde g_j(t)}(x)\geq C_0\inv \sqrt t, \;\text{ and}  \\
&\left(\int_{B_{\tilde g_j(0)}(x,\frac{5}{4})} |\Rm|_{\tilde g_j(t)}^{\frac{n}{2}}\,dV_{\tilde g_j(t)}\right)^{\frac{2}{n}}\leq C_0\ep
\end{cases}
\]
uniformly for all $(x,t)\in U_j\times [0,\widetilde T]$, assuming that $\ep\leq \sigma$.
\end{claim}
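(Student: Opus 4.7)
The plan is to mimic the proof of \cite[Theorem 1.1]{CHL}, substituting the new Expanding Balls Lemma (Lemma \ref{EBL}) and the Volume Perturbation Lemma (Lemma \ref{volumeperturbation}) wherever CHL invoked the scalar curvature lower bound. Since each $\tilde g_j$ has bounded curvature, Shi's short-time existence theorem gives a complete Ricci flow $\tilde g_j(t)$ on $U_j$ for some positive time. I would set up a continuity/open--closed argument: let $T_j^\ast \in (0,\widetilde T]$ be the supremum of times $t$ such that on $[0,t]$ all three estimates hold with $C_0$ replaced by $2C_0$. The goal is to choose $\widetilde T, C_0, \sigma$ depending only on $n, A, \tau, \mathfrak{v}_0$ so that on $[0, T_j^\ast]$ the weakened estimates can be strictly improved back to the versions with $C_0$, forcing $T_j^\ast = \widetilde T$.

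On $[0, T_j^\ast]$, the bound $|\Rm|_{\tilde g_j(t)}\leq 2C_0\ep/t$ activates SBL, giving distance contraction control, while the weakened curvature concentration $(\int|\Rm|^{n/2})^{2/n}\leq 2C_0\ep$ on $\tilde g_j(0)$-balls (combined with Wang's local entropy propagation for the $\bar\nu$-bound) feeds into the new EBL (Lemma \ref{EBL}), giving distance expansion control without needing a scalar curvature bound. These together let me transfer hypotheses (\ref{shorttimeboundedcurvvolume})--(III) forward to balls of $\tilde g_j(t)$. The propagated $\bar\nu$-bound yields a uniform local $L^2$-Sobolev inequality for $\tilde g_j(t)$, and Lemma \ref{volumeperturbation} together with the Sobolev inequality promote the initial volume upper bound into a lower bound of the form $\Vol_{\tilde g_j(t)}B_{\tilde g_j(t)}(x,r)\geq v(n,A,\tau)r^n$ on the scales demanded by EBL, closing the input needed for these lemmas.

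With distance/volume comparisons in place, I would next strictly improve the curvature concentration on $B_{\tilde g_j(0)}(x,5/4)$. Differentiating $\int |\Rm|^{n/2}\,dV_{\tilde g_j(t)}$ in time and using the evolution inequality for $|\Rm|^{n/2}$, the local Sobolev inequality, and H\"older's inequality against the volume control from Lemma \ref{volumeperturbation}, I should obtain a differential inequality of Gr\"onwall type yielding
\[
\Bigl(\int_{B_{\tilde g_j(0)}(x,5/4)}|\Rm|_{\tilde g_j(t)}^{n/2}\,dV_{\tilde g_j(t)}\Bigr)^{2/n}\leq (1+O(\sigma \widetilde T))\cdot 4\ep,
\]
which is strictly less than $C_0\ep$ once $C_0$ is large and $\sigma, \widetilde T$ small. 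With this improved $L^{n/2}$-smallness, a standard Moser iteration applied to $(\partial_t - \Delta)|\Rm|^2\leq -2|\nabla\Rm|^2 + c_n|\Rm|^3$ using the Sobolev inequality (as in \cite{CHL, Martens}) gives the strict pointwise improvement $|\Rm|_{\tilde g_j(t)}(x)\leq C_0\ep/t$. The injectivity radius bound $\inj_{\tilde g_j(t)}(x)\geq C_0^{-1}\sqrt t$ then follows from the pointwise curvature bound combined with Perelman-type non-collapsing, fed by the propagated entropy bound.

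The main obstacle will be closing the curvature concentration propagation step, since this is precisely where \cite{CHL} used the scalar curvature lower bound to prevent rapid volume growth under $\partial_t dV = -\sR\,dV$. The Volume Perturbation Lemma replaces this input, but it introduces a mild circularity: the volume control used to bound $\int|\Rm|^{n/2}$ depends on $\int|\sR|^{n/2}\leq\int|\Rm|^{n/2}$ itself. This circularity must be broken quantitatively by choosing the smallness threshold $\sigma$ and the timescale $\widetilde T$ so that the implicit Gr\"onwall constant stays strictly below what is needed to upgrade $2C_0\ep$ back to $C_0\ep$; keeping track of the dependencies of the constants carefully throughout is the principal technical point.
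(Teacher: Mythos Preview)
Your proposal is essentially the same approach as the paper's. The paper likewise runs a bootstrap modeled on \cite[Theorem 1.2]{CCL} and \cite[Theorem 4.2]{Martens}, with the single substantive modification being that the bootstrap quantity is the curvature concentration over $B_{\tilde g_j(0)}(x,\tfrac54)$ (a $g(0)$-ball rather than a $g(t)$-ball), precisely so that the new EBL (Lemma~\ref{EBL}) and Lemma~\ref{volumeperturbation} can replace every appearance of the scalar curvature lower bound; the Sobolev inequality from the propagated $\bar\nu$-bound supplies the volume lower bound feeding EBL, and \cite[Proposition~2.1]{CCL} (your Moser iteration step) recovers the $C_0\ep/t$ pointwise bound. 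Your identification of the circularity between the volume perturbation and the curvature concentration, broken by smallness of $\sigma,\widetilde T$, is exactly the technical point the paper handles.
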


To prove the theorem (assuming the claim is true), we briefly sketch the argument of the proof of {\cite[Theorem 1.1]{CHL}}. Apply the claim to each $\tilde g_j$ and scale back down to obtain a sequence of flows $g_j(t)$ on $U_j\times [0,T]$, where $T=\frac{\widetilde T}{16^2}$, and each $g_j(t)$ satisfies the estimates \eqref{shorttimeexistenceconclusions}. Since $g_j=g$ on any compact set as $j\to \infty$, we can say by {\cite[Corollary 3.2]{BingLongChen}} and Shi's modified interior estimates {\cite[Theorem 14.16]{Chowetal}} that
\[
\limsup_{j\to \infty} \sup_{\Omega\times [0,T]} |\nabla^k \Rm|_{g_j(t)}\leq C(n,k,\Omega, g), 
\]
for any $k\geq 0$ and any compact $\Omega\subset M$. Then we may apply the Arzel\'a-Ascoli Theorem and a diagonal argument on a countable covering of coordinate charts to conclude that some subsequence of the $g_j(t)$ converges in $C^\infty_{loc}(M\times[0,T])$ to a Ricci flow $g(t)$ on $M\times [0,T]$ with $g(0)=g$. Moreover, $g(t)$ satisfies \eqref{shorttimeexistenceconclusions} because each $g_j(t)$ does, and is thus also a complete flow by SBL. Therefore it suffices to prove the claim. 

\vspace{5pt}

\begin{proof}[Proof of claim]
This argument is only a mild modification of {\cite[Theorem 1.2]{CCL}} and {\cite[Theorem 4.2]{Martens}}. We will follow the proof given in \cite{Martens}, using the same notation. For brevity, we will suppress the dependence on $j$ by denoting $g(t)=\tilde g_j(t)$, $t\in [0,T]$ to be the unique bounded curvature Ricci flow emanating from $\tilde g_j$. Let us begin by recalling the outline of the proof from \cite{Martens}. There the author defines some constants $\widetilde T, \Lambda$ depending only on $n,A,\tau,\mathfrak{v}_0$ (there, the values of $\mathfrak{v}_0$ and $\tau$ were fixed as $2\omega_n$ and $2$ respectively), and then defines $T'>0$ to be the maximal time in $[0, \min\{T, \widetilde T\}]$ for which
\be\label{originalg(t)assumption}
\int_{B_{g(t)}(x, 1)}|\Rm|_{g(t)}^{\frac{n}{2}}\,dV_{g(t)}\leq \Lambda \ep^{\frac{n}{2}}
\ee
uniformly for all $(x,t)\in M\times [0,T']$. Then the proof is split into three steps which we now briefly describe.

\begin{enumerate}[(Step 1)]
\item For any $\alpha>0$, let $T_1$ denote the largest time in which $|\Rm|_{g(t)}\leq \frac{\alpha}{t}$, which is necessarily positive because the flow $g(t)$ is of bounded curvature. Then this step shows the flow enjoys a local $L^2$-Sobolev inequality, and thus also a uniform lower bound on volume ratios up to a fixed scale, for all $(x,t)\in M\times [0,\min\{T', T_1\}]$ where the constants are uniform with respect to the choice of $(x,t)$.
\item This step shows that the $T_1$ from Step 1 can actually be taken to only depend on $n,A,\tau,\mathfrak{v}_0$ as long as $\Lambda$ is sufficiently large (depending only on $n,A,\tau, \mathfrak{v}_0$). Thus we assume that $\widetilde T\leq T_1$. Note that in \cite{Martens}, their choice of $\alpha$ was fixed while ours should depend on $\ep$.  
\item By Steps 1 and 2, we see that the desired conclusions of the theorem hold on the interval $[0,T']$. Thus in this step it is shown that in fact $T'$ can be taken to be uniformly bounded from below, only depending on $n,A,\tau,\mathfrak{v}_0$. This is the step in which EBL is used to say that there must exist some $r_0>0$ such that $B_{g(0)}(x,r_0)\subset B_{g(t)}(x,1)$ for all $(x,t)\in M\times [0,\widetilde T]$. This containment allows one to leverage the evolution of the curvature concentration against the known volume bounds at time $t=0$ to derive a contradiction if $\Lambda$ is large enough and $T'<\widetilde T$. 
\end{enumerate}

We now describe the necessary changes to the proofs of each of these three steps in our setting. For clarity of the proof, instead of defining the constants $\widetilde T, \Lambda, \sigma$ ahead of time, we will simply start with them all being equal to 1 and adjust them as we go only ever depending on $n,A,\tau,\mathfrak{v}_0$. Our first significant change is in the definition of $T'$. Rather than using the definition in \eqref{originalg(t)assumption}, we define $T'$ to be the maximal time in $[0,\min\{T, \widetilde T\}]$ such that 
\be\label{modifiedg(0)assumption}
\int_{B_{g(0)}(x, \frac{5}{4})}|\Rm|_{g(t)}^{\frac{n}{2}}\,dV_{g(t)}\leq \Lambda \ep^{\frac{n}{2}}
\ee
uniformly for all $(x,t)\in M\times [0,T']$. This subtle change of the domain being defined with respect to $g(0)$ rather than $g(t)$ allows for the application of our version of EBL (Lemma \ref{EBL}). Despite this change, we can still say that $T'>0$ because $g(t)$ is a bounded curvature flow (this argument follows by a slight modification of {\cite[Lemma 4.1]{Martens}}). Thus by restricting $\widetilde T$ and $\sigma$ sufficiently small, Step 1 of the proof in \cite{Martens} then follows synonymously. In particular, we obtain a uniform local $L^2$-Sobolev inequality along the flow: 
\be\label{step1sobolevinequality}
\left(\int_{B_{g(t)}(x,1)}u^{\frac{2n}{n-2}}\,dV_{g(t)}\right)^{\frac{n-2}{n}}\leq C_S(n,A,\tau)\int_{B_{g(t)}(x,1)}|\nabla^{g(t)}u|^2_{g(t)}+u^2\,dV_{g(t)},
\ee
for all $u\in W^{1,2}_0(B_{g(t)}(x,1))$ and all $(x,t)\in M\times [0,\min\{T', T_1\}]$. It is well known that an $L^2$-Sobolev inequality also implies a lower bound on volume ratios (for convenience, the reader can look to Lemma 2.1 of the same paper \cite{Martens}):
\be\label{lowerboundonvolumes}
\Vol_{g(t)}B_{g(t)}(x, r)\geq \tilde v(n, C_S) r^n \;\text{ for all } (x,t)\in M\times [0,\min\{T', T_1\}], \; r\in (0,1].
\ee
Recall that in \eqref{step1sobolevinequality} and \eqref{lowerboundonvolumes}, $T_1$ is the largest time in which the flow $g(t)$ enjoys an $\alpha/t$ curvature bound, with $\alpha=C_0\ep$ (here $C_0$ will be defined later in terms of $\Lambda$). Since $g(t)$ is a bounded curvature flow, we know that $T_1>0$, though a priori it appears to depend on the flow. In Step 2, however, we show that $T_1$ can be assumed to be at least as large as $T'$ whenever $\widetilde T$ and $\sigma$ are chosen small enough. It is necessary in this step to diverge from the limiting contradiction argument presented in \cite{Martens}. Instead, we utilize a more classical argument, in which we argue by contradiction by assuming that $T_1<T'$. By the maximality of $T_1$, the flow certainly does enjoy an $\alpha/t$ curvature bound on $[0,T_1]$, and thus condition \eqref{modifiedg(0)assumption} implies \eqref{originalg(t)assumption} on this interval by SBL, assuming $\widetilde T$ is small enough. At this point, we may directly apply {\cite[Proposition 2.1]{CCL}} to derive a contradiction if $\sigma$ is chosen small enough. Note that this argument (and constants thereof) is independent of the exact choice of $\ep\leq \sigma$.

\vspace{5pt}

It remains to show that Step 3 goes through as well. By taking $\widetilde T$ and $\sigma$ small enough, we may still arrive at the conclusion that 
\[
\int_{B_{g(t)}(x, 1/4)}|\Rm|_{g(t)}^{\frac{n}{2}}\,dV_{g(t)}\leq 4\cdot(4 \ep)^\frac{n}{2}
\]
holds uniformly for all $(x,t)\in M\times [0,T']$. Now by our modified assumption on the smallness of the local curvature concentration in \eqref{modifiedg(0)assumption}, our assumed initial volume upper bound \eqref{shorttimeboundedcurvvolume}, and our lower bound on volume ratios along the flow in \eqref{lowerboundonvolumes}, we can say by our version of EBL (Lemma \ref{EBL}) that if $\widetilde T$ is small enough, then there is some $r_0(n, A, \tau, \mathfrak{v}_0)>0$ such that for all $(x,t)\in M\times [0,T']$, there holds
\[
\int_{B_{g(0)}(x, r_0)}|\Rm|_{g(t)}^{\frac{n}{2}}\,dV_{g(t)}\leq 4\cdot (4 \ep)^\frac{n}{2}.
\]
Now the remainder of Step 3 follows verbatim to \cite{Martens} because by \eqref{shorttimeboundedcurvvolume} and \eqref{lowerboundonvolumes} we have a uniform doubling estimate on scales less than 4:
\[
\Vol_{g(0)}B_{g(0)}(x,2r)\leq 2^{3n+1} \mathfrak{v}_0 \tilde v\inv \Vol_{g(0)}B_{g(0)}(x,r)\leq  (2^{3n+1} \mathfrak{v}_0 \tilde v\inv ) \mathfrak{v}_0 r^n \;\text{ for all } r\in (0,4].
\]
Thus we obtain a contradiction if $\Lambda$ is large enough and if $T'$ is assumed to be strictly less than $\min\{T, \widetilde T\}$. The constant $C_0$ can therefore be taken to be $\Lambda^{\frac{2}{n}}$. For convenience in the proof of Theorem \ref{smoothingcorollary}, we will further assume that $\sigma$ is small enough so that $C_0\sigma\leq \frac{1}{4(n-1)}$. 
\end{proof}

\subsection{Proof of Theorem \ref{smoothingcorollary}}\hfill

\vspace{5pt}

For each $i\in \N$, let $g_i(t)$, $t\in [0,T]$ be the Ricci flow emanating from $g_i$ given by Theorem \ref{shortimeexistencemain} (note that $T$ here is independent of $i$). Without loss of generality, we will assume that $T\leq 1$, and for convenience of notation we denote $\alpha=C_0 \sigma$ which satisfies $\alpha<\frac{1}{4(n-1)}$ by the proof of Theorem \ref{shortimeexistencemain}. Thus the $g_i(t)$ satisfy all the following conditions, uniformly for all $(x,t)\in M_i\times [0,T]$:
\be\label{1.4propertiesofshorttime}
\begin{cases}
&\Vol_{g_i(0)}B_{g_i(0)}(x,r)\leq \mathfrak{v}_0 r^n \;\text{ for all } r\leq 1;\\
&\Vol_{g_i(t)}B_{g_i(t)}(x,r)\geq \tilde v(n,A,\tau) r^n\;\text{ for all } r\leq \frac{1}{16}; \\
&|\Rm|_{g_i(t)}(x)\leq \frac{\alpha}{t};\\
&\inj_{g_i(t)}(x)\geq C_0\inv \sqrt{t}; \text{ and }\\
&\left(\int_{B_{g_i(0)}(x,\frac{1}{16})} |\Rm|_{g_i(t)}^{\frac{n}{2}}\,dV_{g_i(t)}\right)^{\frac{2}{n}}\leq \alpha.
\end{cases}
\ee
By Hamilton's Compactness Theorem \cite{Hamilton}, some subsequence of the $(M_i,g_i(t))$, $t\in (0,T]$ converges in the Cheeger-Gromov sense to a smooth pointed complete Ricci flow $(M_\infty, g(t), x_\infty)$, $t\in (0,T]$, that also satisfies the properties in \eqref{shorttimeexistenceconclusions}. For the sake of brevity, we will continue to write $(M_i,g_i,p_i)$ for this subsequence, i.e., relabel so that $i_k=i$. The functions $F_i$ in the statement of the theorem are precisely those coming from the definition of the Cheeger-Gromov convergence. That is to say that $F_i: \Omega_i\to M_i$, which are diffeomorphisms onto their images, where $\Omega_i\subset \Omega_{i+1}\subset M_\infty$ is an exhausting sequence of open precompact sets with $F_i^* g_i(t) \to g(t)$ in $C^\infty_{loc}(M_\infty\times (0,T])$. Thus it suffices to prove the existence of the distance metric $d_0$ and measure $\mu$ as well as their convergence properties. We will deal with $d_0$ first.

\subsubsection{Existence and properties of $d_0$}\hfill

\vspace{5pt}

To show that this family of Ricci flows $g_i(t)$ are regular enough to extract a Gromov-Hausdorff limiting distance metric at time zero, we will use the following claim, which says that the conclusion of {\cite[Theorem 2.4]{HuangKongRongXu}} holds in this setting, i.e., that the flows $g_i(t)$ are uniformly (with respect to $i$) locally bi-H\"older.

\begin{claim}\label{GHcorclaim}
There exists some $C\geq 1$ depending only on $n,A,\tau,\mathfrak{v}_0$ such that for any $x_i,y_i\in M_i$ with $d_{g_i}(x_i,y_i)\leq 1$ and $t\in [0,T]$, there holds 
\be\label{GHcorobjective}
C\inv d_{g_i(t)}^{\frac{1}{1-2(n-1)\alpha}}(x_i, y_i)\leq d_{g_i}(x_i,y_i)\leq C d_{g_i(t)}^{\frac{1}{1+2(n-1)\alpha}}(x_i, y_i).
\ee
\end{claim}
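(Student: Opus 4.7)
I will establish the two inequalities in \eqref{GHcorobjective} by combining three ingredients already available in this setting: (i) the length-distortion estimate coming from the curvature bound $|\Rm|_{g_i(t)}\leq \alpha/t$ in \eqref{1.4propertiesofshorttime} (which gives $|\Rc|_{g_i(t)}\leq (n-1)\alpha/t$), (ii) SBL applied with $c_0 = \alpha$, and (iii) a parabolically-rescaled application of Lemma \ref{EBL}. The overall structure follows \cite[Theorem 2.4]{HuangKongRongXu}, with the role of an \emph{a priori} lower Ricci bound in their hypotheses replaced by the scale-invariant volume noncollapsing and curvature-concentration control in \eqref{1.4propertiesofshorttime}. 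For Step 1, fix a Lipschitz curve $\gamma$ and $0<s<t\leq T$; differentiating $|\gamma'|^2_{g_i(\tau)}$ under Ricci flow and using $|\Rc|_{g_i(\tau)}\leq (n-1)\alpha/\tau$ yields $|\partial_\tau\log|\gamma'|^2_{g_i(\tau)}|\leq 2(n-1)\alpha/\tau$, which upon integration in $\tau\in[s,t]$ and then along $\gamma$ gives
\[
(s/t)^{(n-1)\alpha}L_{g_i(s)}(\gamma)\leq L_{g_i(t)}(\gamma)\leq (t/s)^{(n-1)\alpha}L_{g_i(s)}(\gamma).
\]
Choosing $\gamma$ to be a minimizer at time $s$ or at time $t$ and infimizing then yields the symmetric distance comparison
\[
(s/t)^{(n-1)\alpha} d_{g_i(s)}(x_i,y_i)\leq d_{g_i(t)}(x_i,y_i)\leq (t/s)^{(n-1)\alpha}d_{g_i(s)}(x_i,y_i).
\]

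Step 2 establishes the right-hand inequality of \eqref{GHcorobjective}: SBL applied with $c_0=\alpha$ gives $d_{g_i}(x_i,y_i)\leq d_{g_i(s)}(x_i,y_i)+\beta\sqrt{\alpha s}$, which, combined with the upper bound in Step 1, yields
\[
d_{g_i}(x_i,y_i)\leq (t/s)^{(n-1)\alpha}d_{g_i(t)}(x_i,y_i)+\beta\sqrt{\alpha s}.
\]
Balancing the two terms by choosing $s\sim(t^{(n-1)\alpha}d_{g_i(t)}(x_i,y_i))^{2/(1+2(n-1)\alpha)}$ gives the claimed H\"older bound after using $t\leq T\leq 1$. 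Step 3 establishes the left-hand inequality: set $\lambda:=d_{g_i}(x_i,y_i)$ and rescale $\tilde g_i(\tilde t):=\lambda^{-2}g_i(\lambda^2\tilde t)$. The bounds in \eqref{1.4propertiesofshorttime} pass covariantly to $\tilde g_i$ (scale-invariance of $|\sR|^{n/2}dV$ preserves curvature concentration, while the volume ratio and $\alpha/\tilde t$ bounds transform correctly), so Lemma \ref{EBL} applies to $\tilde g_i$ provided $\lambda$ is small enough that the required $\tilde g_i$-balls of radius $\leq 2$ lie in a region where \eqref{1.4propertiesofshorttime} is available. This yields a Lipschitz-type bound $d_{g_i(s)}(x_i,y_i)\leq C\lambda$ for any $s\leq\lambda^2\widehat T$. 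Integrating from $s=\lambda^2\widehat T$ to $t$ using Step 1 then produces
\[
d_{g_i(t)}(x_i,y_i)\leq (t/s)^{(n-1)\alpha}d_{g_i(s)}(x_i,y_i)\leq C(t/\lambda^2\widehat T)^{(n-1)\alpha}\lambda\leq C'd_{g_i}(x_i,y_i)^{1-2(n-1)\alpha},
\]
which is the desired H\"older bound (using $t\leq 1$). The remaining ranges ($\lambda$ bounded away from $0$, or $\lambda^2\widehat T\geq t$) produce instead the stronger Lipschitz bound $d_{g_i(t)}(x_i,y_i)\leq Cd_{g_i}(x_i,y_i)$, which implies the H\"older bound whenever $d_{g_i}(x_i,y_i)\leq 1$.

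The main obstacle will be Step 3: verifying that every hypothesis of Lemma \ref{EBL} passes to the parabolic rescaling of $g_i$ by $\lambda^{-2}$. In particular, the lower volume-ratio bound in \eqref{1.4propertiesofshorttime} is only stated at scales $\leq 1/16$ in $g_i$, so after rescaling we must restrict to $\tilde g_i$-balls of radius no larger than $1/(16\lambda)$; this forces a smallness condition on $\lambda$ (e.g.\ $\lambda\leq 1/32$), and the complementary range is absorbed into constants by an unrescaled application of Lemma \ref{EBL}. One also has to check that the curvature-concentration estimate from Claim \ref{shorttimeclaim} covers the $g_i$-ball of radius $2\lambda$, which is straightforward but requires bookkeeping of radii. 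The remaining Ricci-tensor integration in Step 1 and the optimization in Step 2 are routine, and the final constant $C$ depends only on $n,A,\tau,\mathfrak{v}_0$ through the constants in Lemma \ref{EBL} and the uniform bounds recorded in \eqref{1.4propertiesofshorttime}.
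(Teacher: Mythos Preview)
Your proposal is correct and follows essentially the same approach as the paper: the RHS of \eqref{GHcorobjective} via SBL combined with the $(t/s)^{(n-1)\alpha}$ distortion estimate and an optimized choice of $s$, and the LHS via a parabolically rescaled application of Lemma~\ref{EBL} (the paper phrases this as ``scaled versions of Lemma~\ref{EBL}'') followed by integration of the $\alpha/t$ curvature bound from $s=\widehat T\lambda^2$ to $t$. The only cosmetic difference is that for the complementary range $\lambda\in(1/32,1]$ the paper chains together $32$ intermediate points to reduce to segments of length $\leq 1/32$, which is what your ``unrescaled application of Lemma~\ref{EBL}'' must ultimately amount to since the hypotheses in \eqref{1.4propertiesofshorttime} are stated at scale $1/16$.
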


\vspace{5pt}

By an identical argument to {\cite[Theorem 1.4]{CCL}} and {\cite[Theorem 5.1]{CHL}}, the claim implies the existence of a distance metric $d_0$ on $M_\infty$ such that some further subsequence of the $g_i$ (still calling them $g_i$) satisfies
\be\label{locallyuniformdistanceconv}
\lim_{i\to \infty}d_{F_i^* g_i}(x,y)= d_0(x, y)
\ee
for every pair of points $x,y\in M_\infty$. Thus we can say that the bi-H\"older property \eqref{GHcorobjective} decends to the limit as well:
\be\label{biholderlimit}
C\inv d_{g(t)}^{\frac{1}{1-2(n-1)\alpha}}(x, y)\leq d_{0}(x,y)\leq C d_{g(t)}^{\frac{1}{1+2(n-1)\alpha}}(x, y)
\ee
whenever $d_0(x,y)\leq 1$. This implies that $d_0$ generates the same topology as $M_\infty$. The fact that $(M_\infty,d_0)$ is a complete metric space follows from \eqref{biholderlimit} by an argument that we now sketch. If $(M_\infty, d_0)$ were not complete then we could choose some continuous rectifiable curve $\gamma:[0,1)\to M_\infty$, i.e., $\operatorname{Length}_{d_0}(\gamma)<\infty$, but with the property that $\lim_{s\uparrow 1}\gamma(s)\not\in M_\infty$. Now we may pick $\ep>0$ small enough so that $d_{g(t)}(x,y)\leq \ep$ implies that $d_0(x,y)\leq 1$ by the RHS of \eqref{biholderlimit}. Since $g(T)$ is complete, we may choose countably many points $\{x_j\}_{j\in \N}\subset \gamma([0,1))$ such that the $\{B_{g(T)}(x_j,\ep)\}_{j\in\N}$ are mutually disjoint, and we may write let $\{t_j\}_{j\in\N}\subset (0,1)$ be corresponding times with the property that $t_j$ is the latest time in which $\gamma(t_j)=x_j$. By the continuity of $\gamma$ with respect to $d_0$ and the fact $d_0$ generates the same topology as $g(T)$, we may select times $t_j'\in (t_j,1)$ which mark the first time after $t_j$ in which $\gamma$ intersects $\partial B_{g(T)}(x_j,\ep)$, i.e., 
\[
t_j'=\sup\{t>t_j : \gamma(s)\in B_{g(T)}(x_j,\ep) \;\text{ for all } s\in (t_j,t]\}.
\]
Then since $\ep$ was chosen small enough so that \eqref{biholderlimit} holds for $y\in B_{g(T)}(x_j,\ep)$, we obtain
\begin{align*}
\infty&=\sum_{j=1}^\infty C\inv \ep^{\frac{1}{1-2(n-1)\alpha}} =\sum_{j=1}^\infty C\inv [d_{g(T)}(x_j,\gamma(t_j'))]^{\frac{1}{1-2(n-1)\alpha}}\\&\leq\sum_{j=1}^\infty d_{0}(x_j,\gamma(t_j'))\leq \operatorname{Length}_{d_0}(\gamma)<\infty,
\end{align*}
a contradiction. Thus $(M_\infty, d_0)$ is a complete metric space. The claim about the pointed Gromov-Hausdorff convergence with the $F_i$ being the approximating functions is straightforward from \eqref{locallyuniformdistanceconv}. So it suffices to prove the claim.

\vspace{5pt}

\begin{proof}[Proof of Claim \ref{GHcorclaim}]
We will prove the inequalities in \eqref{GHcorobjective} individually. In what follows, we let $C$ denote any large constant depending only on $n,A, \tau, \mathfrak{v}_0,\alpha$ (recall that $\alpha=C_0\sigma$ also depends only on $n,A,\tau,\mathfrak{v}_0$) and may vary line-by-line, and we will also drop the subscripts $i$ for brevity. The RHS is identical to the proof of the analogous inequality in property (iv) of {\cite[Theorem 5.1]{CHL}}, so we only give a sketch. By the $\alpha/t$ curvature bound and SBL we obtain, for $0<s\leq t\leq T$,
\be\label{GHrhse1}
d_{g(0)}(x,y)\leq d_{g(s)}(x,y)+C\sqrt{s}\leq \left(\frac{t}{s}\right)^{(n-1)\alpha}d_{g(t)}(x,y)+C\sqrt{s}.
\ee
If $d_{g(t)}(x,y)>1$ then the RHS of \eqref{GHcorobjective} is obvious, so assume $d_{g(t)}(x,y)\leq 1$ and define $s\in (0,t]$ by
\be\label{GHrhse2}
s=t \,d_{g(t)}^{\frac{2}{1+2(n-1)\alpha}}(x,y).
\ee
Then the RHS of \eqref{GHcorobjective} follows by plugging \eqref{GHrhse2} directly into \eqref{GHrhse1} and simplifying using the assumptions that $t\leq 1$. \\

Now we prove the LHS. Recall from the proof of Theorem \ref{shortimeexistencemain} that we have some uniform lower bound of volume ratios up to some fixed scale along the flow (cf. \eqref{lowerboundonvolumes}). Thus by scaled versions of Lemma \ref{EBL}, we may obtain that 
\be\label{GHlhseebl}
d_{g(t)}(x,y)\leq C d_{g(0)}(x,y) \;\text{ whenever } \sqrt{\widehat T\inv t} \leq d_{g(0)}(x,y)\leq \frac{1}{32}
\ee
where $\widehat T$ is from Lemma \ref{EBL}, and depends on $n,A,\tau,\mathfrak{v}_0,\alpha$. If $t\geq\frac{ \widehat T}{32^2}$, then the $\alpha/t$ curvature bound allows us to conclude $d_{g(t)}(x,y)\leq C d_{g(0)}(x,y)$ immediately. If $\sqrt{\widehat T\inv t} \leq \frac{1}{32}\leq d_{g(0)}(x,y)\leq 1$, then we set $x_0=x$, $x_{32}=y$ and choose intermediate points $\{x_j\}_{j=1}^{31}$ satisfying
\[
\begin{cases}
&\sum_{j=0}^{31} d_{g(0)}(x_j,x_{j+1})\leq 32d_{g(0)}(x,y) ,\\
&\sqrt{\widehat T\inv t} \leq d_{g(0)}(x_j,x_{j+1})\leq \frac{1}{32}.
\end{cases}
\]
Then we may conclude $d_{g(t)}(x,y)\leq C d_{g(0)}(x,y)$ by applying \eqref{GHlhseebl} for each $j=0, 1, \dots, 31$, along with the triangle inequality. So it suffices to consider $x,y,t$ satisfying $d_{g(0)}(x,y)< \sqrt{\widehat T\inv t} < \frac{1}{32}$. In this case, write $\sqrt{s}=\sqrt{\widehat T} d_{g(0)}(x,y)$ so that by the $\alpha/t$ curvature bound and \eqref{GHlhseebl}, we obtain
\[
d_{g(t)}(x,y)\leq s^{-(n-1)\alpha}d_{g(s)}(x,y)\leq C s^{-(n-1)\alpha}d_{g(0)}(x,y)\leq Cd_{g(0)}^{1-2(n-1)\alpha}(x,y).
\]
This shows the LHS of \eqref{GHcorobjective} completes the proof of the claim
\end{proof}

\subsubsection{Existence and properties of $\mu$}\hfill

\vspace{5pt}

Our technique to establishing the existence of the measure $\mu$ will be to show that the limits $\lim_{t\to 0}\Vol_{g(t)}\Omega$ and $\lim_{i\to \infty}\Vol_{g_i} F_i(\Omega)$ both exist and are equal whenever $\Omega$ is a precompact Borel set. In the case that $\Omega$ is not precompact, we can then define $\mu(\Omega)$ as a limit of countably many precompact pieces. Before commencing the proof however, we give a reformulation of our Volume Perturbation Lemma \ref{volumeperturbation} in the context of Theorem \ref{shortimeexistencemain}.

\begin{lem}\label{VolPertwithThm}
Suppose $(M^n, g_0)$ satisfies the hypotheses of Theorem \ref{shortimeexistencemain} for some choices of variables $n\geq 4, A, \tau, \mathfrak{v}_0>0$, and write $g(t)$, $t\in [0,T]$ to be the corresponding complete Ricci flow. Given any precompact set $\Omega\subset \subset M$ such that $B_{g(0)}(x,r)\subset \Omega$ for some $r\in (0,\frac{1}{8}]$, then whenever $0\leq s\leq t\leq \min\{\frac{r^2}{\beta^2},T\}$, there holds 
\[
C^{-(t-s)}\Vol_{g(s)}\Omega\leq \Vol_{g(t)}\Omega\leq C^{(t-s)}\Vol_{g(s)}\Omega.
\]
Here $C>0$ is some constant that depends only on $n,A,\tau, r$ as well as the smallest number $N$ such that 
\[
\Omega\subset \bigcup_{j=1}^N B_{g(0)}\left(x_j, \frac{1}{16}\right).
\]
\end{lem}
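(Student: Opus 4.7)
The plan is to apply the Volume Perturbation Lemma \ref{volumeperturbation} directly, verifying each of its three hypotheses on $\Omega$ by combining the covering structure with the conclusions \eqref{1.4propertiesofshorttime} of Theorem \ref{shortimeexistencemain}.

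First I would set the interior radius appearing in Lemma \ref{volumeperturbation} to be $r' := r/2$. For its containment hypothesis (a), I would invoke the Shrinking Balls Lemma: since $|\Rm|_{g(t)} \leq C_0\epsilon/t$, SBL gives $B_{g(t)}(x, r - \beta\sqrt{C_0\epsilon\, t}) \subset B_{g(0)}(x,r) \subset \Omega$, and the built-in smallness $C_0\sigma \leq \tfrac{1}{4(n-1)}$ arranged at the end of the proof of Theorem \ref{shortimeexistencemain} ensures $\beta\sqrt{C_0\epsilon\, t} \leq r/2$ whenever $t \leq r^2/\beta^2$. Hypothesis (c), the lower volume bound at scale $r/2$, is immediate from \eqref{1.4propertiesofshorttime}: since $r/2 \leq 1/16$, one has $\Vol_{g(t)} B_{g(t)}(x, r/2) \geq \tilde v(n,A,\tau)(r/2)^n$.

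For the integral hypothesis (b), I would use the covering $\Omega \subset \bigcup_{j=1}^N B_{g(0)}(x_j, 1/16)$ together with the pointwise bound $|\sR| \leq c(n)|\Rm|$ to estimate
\[
\int_\Omega |\sR_{g(t)}|^{n/2}\, dV_{g(t)} \leq c(n) \sum_{j=1}^N \int_{B_{g(0)}(x_j, 1/16)} |\Rm_{g(t)}|^{n/2}\, dV_{g(t)} \leq c(n) N (C_0\epsilon)^{n/2},
\]
where the last inequality is the curvature concentration conclusion of Theorem \ref{shortimeexistencemain} applied ball-by-ball. Taking $2/n$ powers and using $C_0\epsilon \leq C_0\sigma \leq \tfrac{1}{4(n-1)}$ bounds the $L^{n/2}$-norm by a quantity depending only on $n$ and $N$.

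Feeding these three inputs into Lemma \ref{volumeperturbation} yields the desired double inequality with a constant of the form
\[
C = \exp\!\left(c(n)\, N^{2/n}\, C_0\sigma\, \tilde v(n,A,\tau)^{-2/n}\, (r/2)^{-2}\right),
\]
which depends only on $n, A, \tau, r, N$ as required. The only subtle point — rather than a real obstacle — is that keeping the interior radius $r/2$ fixed throughout the full interval $[0, r^2/\beta^2]$ genuinely relies on the dimensional smallness $C_0\sigma \leq \tfrac{1}{4(n-1)}$; everything else is bookkeeping on top of Lemma \ref{volumeperturbation}.
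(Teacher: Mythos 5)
Your proposal is correct and follows essentially the same route as the paper: verify the three hypotheses of Lemma \ref{volumeperturbation} with interior radius $r/2$ via SBL and the smallness $\alpha=C_0\sigma\leq\tfrac{1}{4(n-1)}$ for the containment, the scale-$\tfrac{1}{16}$ lower volume bound from \eqref{1.4propertiesofshorttime}, and the $N$-ball covering with $|\sR|\leq n|\Rm|$ for the integral scalar curvature bound, yielding the same exponential constant. The only cosmetic difference is bookkeeping of the dimensional factor (the paper writes $n^{n/2}$ explicitly so that $C\leq \exp\bigl(N^{2/n}n\tilde v^{-2/n}/((n-1)r^2)\bigr)$), which does not affect the argument.
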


\begin{proof}
The proof follows from the properties of such a flow $g(t)$ as laid out in \eqref{1.4propertiesofshorttime} and a direct application of Lemma \ref{volumeperturbation}. In particular, we can say as long as $t\in [0,T]$ satisfies $t\leq \frac{r^2}{\beta^2}$, or more generally $t\leq \frac{r^2}{4\beta^2\alpha}$ (recall $\alpha\leq \frac{1}{4(n-1)}$), we can apply SBL to see that
\[
B_{g(t)}(x,r/2)\subset B_{g(0)}(x,r)\subset \Omega.
\]
Since $r\leq \frac{1}{8}$, we have that $r_0:=r/2\leq \frac{1}{16}$ is a scale in which $g(t)$ has a lower bound of volume ratios $\tilde v$ from \eqref{1.4propertiesofshorttime}. Also, we can bound the integral scalar curvature on $\Omega$ as
\[
\left(\int_\Omega |\sR_{g(t)}|^{\frac{n}{2}}\,dV_{g(t)}\right)^{\frac{2}{n}}\leq \left(\sum_{j=1}^N\int_{B_{g(0)}(x_j,\frac{1}{16})}n^{\frac{n}{2}}|\Rm|_{g(t)}^{\frac{n}{2}}\,dV_{g(t)}\right)^{\frac{2}{n}}\leq (n^{\frac{n}{2}} N \alpha^{\frac{n}{2}})^{\frac{2}{n}}=N^{\frac{2}{n}} n \alpha. 
\]
Therefore, we can see that the result holds by applying Lemma \ref{volumeperturbation} with $r,\sigma$ there being $r_0, N^{\frac{2}{n}} n \alpha$, and thus we may take 
\[
C=\exp\left(N^{\frac{2}{n}} n \alpha  \tilde v^{-\frac{2}{n}}(r/2)^{-2}\right)\leq \exp\left(\frac{N^{\frac{2}{n}} n   \tilde v^{-\frac{2}{n}}}{(n-1)r^2}\right)
\]
where $\tilde v$ depends only on $n,A,\tau$. 
\end{proof}

\vspace{5pt}

We also require the following elementary lemma which will be applied with $f(t)$ being either $\Vol_{g_i(t)}F_i(\Omega)$ or $\Vol_{g(t)}\Omega$ in our proof. 

\begin{lem}\label{functionlemma}
Let $f: (0,T]\to \R$ be a positive continuous function satisfying
\[
C^{-(t-s) }f(s)\leq f(t) \leq C^{(t-s)} f(s)
\]
whenever $0<s\leq t\leq T$. Then for any $\ep>0$, there exists $t_\ep\in (0,T]$ depending only on $\ep$ as well as upper bounds of $C$ and $f(T)$, such that whenever $0<s\leq t\leq t_\ep$, there holds
\[
|f(t)-f(s)|\leq\ep.
\]
In particular, $\lim_{t\to 0}f(t)$ exists. 
\end{lem}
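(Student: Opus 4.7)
The plan is to use the two-sided multiplicative inequality with $t=T$ fixed to first obtain a uniform upper bound for $f$ on $(0,T]$, and then feed that uniform bound back into the inequality (applied to general pairs $s\leq t$) to get an equicontinuity-type estimate on the whole interval that degenerates as $t-s \to 0$. Throughout I may freely assume $C \geq 1$, since replacing $C$ by $\max\{C,1\}$ only weakens the hypothesis.

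First I would substitute $t = T$ into the given inequality to get, for every $s \in (0,T]$,
\[
f(s) \leq C^{T-s} f(T) \leq C^{T} f(T),
\]
which depends only on upper bounds for $C$ and $f(T)$ (and on $T$, which is a universal quantity in our applications). Next, for any $0<s\leq t\leq T$ the hypothesis rearranges to
\[
-(1-C^{-(t-s)})\, f(s) \leq f(t) - f(s) \leq (C^{t-s}-1)\, f(s),
\]
and since $C\geq 1$ gives $1-C^{-(t-s)} \leq C^{t-s}-1$, combining this with the uniform bound above yields
\[
|f(t) - f(s)| \leq (C^{t-s}-1)\, f(s) \leq (C^{t-s}-1)\, C^{T} f(T).
\]

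Given $\epsilon>0$, choose $t_\epsilon \in (0,T]$ small enough that $(C^{t_\epsilon}-1)\, C^{T} f(T) \leq \epsilon$; explicitly one may take $t_\epsilon = \min\{T, \log(1+\epsilon C^{-T} f(T)^{-1})/\log C\}$, which depends only on $\epsilon$ and on upper bounds of $C$ and $f(T)$. Then for $0<s\leq t\leq t_\epsilon$ we have $t-s \leq t_\epsilon$, hence $|f(t)-f(s)|\leq \epsilon$ by the displayed inequality. This is a Cauchy condition along any sequence $t_k \downarrow 0$, so by completeness of $\R$ the limit $\lim_{t\to 0}f(t)$ exists.

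There is no serious obstacle here: the only subtlety is making sure the chosen $t_\epsilon$ depends solely on the data permitted by the statement, which is transparent from the explicit formula above. The whole argument is essentially the observation that the multiplicative comparison forces $f$ to be uniformly bounded in terms of $f(T)$, and then that same comparison, applied with the uniform bound in place of $f(s)$, promotes pointwise control to uniform control as $s,t\to 0$.
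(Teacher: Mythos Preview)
Your proof is correct and essentially identical to the paper's: both bound $\sup f$ by $C^T f(T)$ via the hypothesis with $t=T$, then use $|f(t)-f(s)|\leq (C^{t-s}-1)C^T f(T)$ and choose the same explicit $t_\ep=\min\{T,\frac{1}{\log C}\log(1+\ep C^{-T}f(T)^{-1})\}$. Your write-up is slightly more detailed (e.g., the verification that $1-C^{-(t-s)}\leq C^{t-s}-1$), but the argument is the same.
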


\begin{proof}
For any $\ep>0$, define
\[
t_\ep=\min\left\{T,\frac{1}{\log C}\log\left(1+\frac{\ep}{C^T f(T)}\right) \right\}.
\]
Then whenever $0<s\leq t\leq t_\ep$, we obtain
\[|f(t)-f(s)|\leq (C^{(t-s)}-1)\sup_{t'\in (0,T]}f(t')\leq(C^{t_\ep}-1) C^T f(T)\leq \ep.\]
\end{proof}

Armed with these preliminary lemmas, we are ready to prove the existence and equality of the desired limits. We split our proof is split into three claims: (1) First show that the time limit exists assuming that the interior of the set is not empty (Claim \ref{nonemtpyintclaim1}); Next show that the two limits are equal under the same assumption (Claim \ref{gilimitgtlimit}); Finally, remove the assumption about nonempty interior (Claim \ref{gilimitgtlimit2}). Note that it is unambiguous whether the `interior' of a set is with respect to the metric $d_0$ or the topology coming from the smooth structure of $M_\infty$ because we have already shown these are the same.

\begin{claim}\label{nonemtpyintclaim1}
For any precompact set $\Omega\subset M_\infty$ with nonempty interior, the limit
\[
\lim_{t\to 0}\Vol_{g(t)}\Omega
\]
exists.
\end{claim}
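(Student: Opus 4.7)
The overall strategy is to show that $t\mapsto\Vol_{g(t)}\Omega$ is uniformly Cauchy as $t\to 0^+$ by applying the Volume Perturbation Lemma \ref{volumeperturbation} directly to the limiting Ricci flow on $M_\infty\times(0,T]$. Once a two-sided exponential comparison
\[
C^{-(t-s)}\Vol_{g(s)}\Omega\leq \Vol_{g(t)}\Omega\leq C^{(t-s)}\Vol_{g(s)}\Omega
\]
is established for $0<s\leq t\leq T$ with a $t$-independent constant, Lemma \ref{functionlemma} applied to $f(t)=\Vol_{g(t)}\Omega$ immediately yields the existence of $\lim_{t\to 0}\Vol_{g(t)}\Omega$.

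To verify the three hypotheses of Lemma \ref{volumeperturbation} uniformly on $(0,T]$, I first pick $x_0$ in the interior of $\Omega$ and some $r_0>0$ with $B_{d_0}(x_0,r_0)\subset\Omega$. By the bi-H\"older estimate \eqref{biholderlimit}, which holds uniformly for $t\in(0,T]$, there exists some $r\in(0,1/16]$ (depending only on $r_0$ and the constants already present) such that $B_{g(t)}(x_0,r)\subset B_{d_0}(x_0,r_0)\subset\Omega$ for every $t\in(0,T]$; this verifies hypothesis (a). The lower volume bound $\Vol_{g(t)}B_{g(t)}(x_0,r)\geq\tilde v r^n$ of hypothesis (c) is immediate from the second line of \eqref{1.4propertiesofshorttime}, whose estimates descend to $g(t)$ under the Cheeger-Gromov convergence.

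The main obstacle is hypothesis (b), a uniform upper bound on $\int_\Omega|\sR_{g(t)}|^{n/2}\,dV_{g(t)}$. The curvature concentration in \eqref{1.4propertiesofshorttime} is phrased in terms of $g_i(0)$-balls, while $M_\infty$ carries no canonical metric at time zero. The plan is to transfer the bound through the approximation: cover the precompact set $\Omega$ by finitely many (say $N$) $g(T)$-balls of a small fixed radius, pull them via the diffeomorphisms $F_i$ into $M_i$, and invoke the Shrinking Balls Lemma on each flow $g_i(t)$ (using the curvature bound $|\Rm|_{g_i(t)}\leq\alpha/t$ and shrinking $T$ if needed) to embed each such ball inside a $g_i(0)$-ball of radius $1/16$, on which the curvature concentration is at most $\alpha^{n/2}$. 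Passing $i\to\infty$ using the $C^\infty_{loc}$ convergence $F_i^*g_i(t)\to g(t)$ then delivers
\[
\int_\Omega |\Rm|_{g(t)}^{n/2}\,dV_{g(t)}\leq N\alpha^{n/2}
\]
uniformly in $t\in(0,T]$, which is exactly the remaining hypothesis.

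With all three hypotheses verified with $t$-independent constants, Lemma \ref{volumeperturbation} produces the desired exponential comparison on $(0,T]$. Plugging this into Lemma \ref{functionlemma}, whose conclusion only requires the comparison between pairs of positive times together with the uniform bound $\Vol_{g(t)}\Omega\leq C^T\Vol_{g(T)}\Omega<\infty$ on $(0,T]$, completes the proof of existence of $\lim_{t\to 0}\Vol_{g(t)}\Omega$.
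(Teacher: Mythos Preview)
Your proof is correct and follows essentially the same approach as the paper: both cover $\Omega$ by finitely many small balls, push to $M_i$ via the $F_i$, invoke SBL to land inside $g_i(0)$-balls of radius $1/16$, use the curvature concentration bound there, and then combine Lemmas \ref{volumeperturbation} and \ref{functionlemma}. The only cosmetic difference is that the paper first establishes the exponential volume comparison \eqref{gibothsidesbound} for each $g_i(t)$ via Lemma \ref{VolPertwithThm} and then passes $i\to\infty$, whereas you pass the integral curvature bound to the limiting flow first and apply Lemma \ref{volumeperturbation} directly on $g(t)$ (using the bi-H\"older estimate \eqref{biholderlimit} for the ball containment where the paper uses the Gromov--Hausdorff approximation).
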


\begin{proof} 
Choose some $p\in M_\infty$ and $r\in (0,\frac{1}{8}]$ such that 
\[
B_{d_0}(p,2r)\subset \Omega.
\]
Then for $i$ large enough, we may insist that
\[
B_{g_i(0)}(F_i(p),r)\subset F_i\left(B_{d_0}(p,2r)\right)\subset F_i(\Omega).
\]
For this fixed choice of $r>0$, we will denote $T_\Omega=\min\{\frac{ r^2}{\beta^2}, T\}$. Now since $\Omega\subset M_\infty$ is precompact, we may take finitely many points $\{x_j\}_{j=1}^N\subset M_\infty$ such that 
\[
\Omega\subset\bigcup_{j=1}^N B_{g(T_\Omega)}\left(x_j, \frac{1}{100}\right).
\]
Then restrict $i\in\N$ large enough so that for each $1\leq j\leq N$,
\[
F_i\left(B_{g(T_\Omega)}\left(x_j, \frac{1}{100}\right)\right)\subset B_{g_i(T_\Omega)}\left(F_i(x_j), \frac{1}{50}\right).
\]
Then by our definition of $T_\Omega$, our assumption of $ r\leq \frac{1}{8}$, and SBL, we can say that for all $i$ sufficiently large, 
\[
F_i(\Omega)\subset\bigcup_{j=1}^N B_{g_i}\left(F_i(x_j), \frac{1}{16}\right).
\]
Then by Lemma \ref{VolPertwithThm}, we can say that there exists a constant $C>0$ depending on $n,A,\tau, N$ (but not on $i$) such that whenever $0\leq s\leq t\leq T_\Omega$, 
\be\label{gibothsidesbound}
C^{-(t-s)}\Vol_{g_i(s)}F_i(\Omega)\leq \Vol_{g_i(t)}F_i(\Omega)\leq C^{(t-s)}\Vol_{g_i(s)}F_i(\Omega).
\ee
So for any fixed $0<s\leq t\leq T_\Omega$, we can take $i\to \infty$, which shows that \eqref{gibothsidesbound} transcends to the limit:
\[
C^{-(t-s)}\Vol_{g(s)}\Omega\leq \Vol_{g(t)}\Omega\leq C^{(t-s)}\Vol_{g(s)}\Omega \;\text{ whenever } 0<s\leq t\leq T_\Omega. 
\]
We can therefore conclude that $\lim_{t\to 0}\Vol_{g(t)} \Omega$ exists by a direct application of Lemma \ref{functionlemma}.
\end{proof}

\vspace{5pt}

\begin{claim}\label{gilimitgtlimit}
For any precompact set $\Omega\subset M_\infty$ with nonempty interior, there holds
\[
\lim_{i\to \infty}\Vol_{g_i}F_i(\Omega)=\lim_{t\to 0}\Vol_{g(t)}\Omega.
\]
\end{claim}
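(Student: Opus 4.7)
The plan is a standard three-epsilon argument. For any $\epsilon > 0$, I will find $i_0$ such that
\[
\bigl|\Vol_{g_i} F_i(\Omega) - \lim_{t \to 0} \Vol_{g(t)} \Omega\bigr| < \epsilon \qquad \text{for all } i \geq i_0,
\]
by interposing a fixed positive time $t_\epsilon > 0$. The three ingredients are: uniform (in $i$) volume perturbation estimates for $\{g_i(t)\}$ on $F_i(\Omega)$, convergence $\Vol_{g_i(t_\epsilon)} F_i(\Omega) \to \Vol_{g(t_\epsilon)} \Omega$ via smooth Cheeger-Gromov convergence $F_i^* g_i(t_\epsilon) \to g(t_\epsilon)$ on the precompact set $\Omega$, and the fact that each $g_i(t)$ is smooth down to $t = 0$ so that $\Vol_{g_i(t)} F_i(\Omega)$ is continuous on $[0, T_\Omega]$.

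First I would repeat the covering-and-SBL construction from the proof of Claim \ref{nonemtpyintclaim1}: starting from a cover $\Omega \subset \bigcup_{j=1}^N B_{g(T_\Omega)}(x_j, 1/100)$ and using $F_i$ together with SBL (with the uniform curvature bound $|\Rm|_{g_i(t)} \leq \alpha/t$ and $T_\Omega \leq r^2/\beta^2$), I conclude that for all $i$ sufficiently large, $F_i(\Omega) \subset \bigcup_{j=1}^N B_{g_i}(F_i(x_j), 1/16)$. Applying Lemma \ref{VolPertwithThm} then produces a single constant $C = C(n, A, \tau, N, r) \geq 1$ independent of $i$ such that
\[
C^{-(t-s)} \Vol_{g_i(s)} F_i(\Omega) \leq \Vol_{g_i(t)} F_i(\Omega) \leq C^{(t-s)} \Vol_{g_i(s)} F_i(\Omega)
\]
for all $0 \leq s \leq t \leq T_\Omega$, and analogously for $\Vol_{g(t)} \Omega$ (as already established in Claim \ref{nonemtpyintclaim1}). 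Since $F_i^* g_i(T_\Omega) \to g(T_\Omega)$ smoothly on $\Omega$, the volumes $\Vol_{g_i(T_\Omega)} F_i(\Omega)$ admit a uniform upper bound $V_0 < \infty$. Feeding $C$ and $V_0$ into Lemma \ref{functionlemma} yields a single $t_\epsilon \in (0, T_\Omega]$ (independent of $i$) such that, for all $i$ large,
\[
\bigl|\Vol_{g_i} F_i(\Omega) - \Vol_{g_i(t_\epsilon)} F_i(\Omega)\bigr| \leq \epsilon/3 \quad \text{and} \quad \bigl|\Vol_{g(t_\epsilon)} \Omega - \lim_{t \to 0}\Vol_{g(t)} \Omega\bigr| \leq \epsilon/3,
\]
where in the first estimate I use continuity of $\Vol_{g_i(t)} F_i(\Omega)$ down to $t = 0$ to take $s \to 0$.

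Finally, at the fixed time $t_\epsilon > 0$, smooth Cheeger-Gromov convergence on the precompact set $\Omega$ gives $\Vol_{g_i(t_\epsilon)} F_i(\Omega) = \Vol_{F_i^* g_i(t_\epsilon)} \Omega \to \Vol_{g(t_\epsilon)} \Omega$, so for all $i$ sufficiently large this difference is at most $\epsilon/3$; the triangle inequality then closes the argument. The only real delicacy is ensuring that $C$ in the volume perturbation estimate is uniform in $i$, which reduces to showing that a single covering number $N$ works for all $F_i(\Omega)$; this is precisely the SBL-based argument already carried out in Claim \ref{nonemtpyintclaim1}. Once that uniformity is in hand, the remainder is formal.
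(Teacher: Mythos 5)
Your proposal is correct and follows essentially the same route as the paper: both rely on the uniform-in-$i$ volume perturbation bound \eqref{gibothsidesbound} (via the covering/SBL construction and Lemma \ref{VolPertwithThm}), feed it into Lemma \ref{functionlemma} to produce an $i$-independent intermediate time $t_\ep$, use Cheeger--Gromov convergence at that fixed positive time, and conclude by the triangle inequality. The only cosmetic difference is that you obtain the uniform bound on $f$ needed for Lemma \ref{functionlemma} from convergence at $T_\Omega$, whereas the paper uses $\Vol_{g_i(t_0)}F_i(\Omega)\leq 2\Vol_{g(t_0)}\Omega$ at a small time $t_0$; both are the same idea.
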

\begin{proof} 
We have already established that the limit $\lim_{t\to 0}\Vol_{g(t)}\Omega$ exists. Thus for any $\ep>0$, we may assume $t_0>0$ is small enough so that whenever $s\in (0,t_0]$,
\[
|\lim_{t\to 0}\Vol_{g(t)}\Omega-\Vol_{g(s)}\Omega|<\ep.
\]
Without loss of generality, we assume $t_0\leq T_\Omega$ where $T_\Omega$ is defined in the proof of Claim \ref{nonemtpyintclaim1}. In what follows, we assume that $i\geq i_\Omega$ for some $i_\Omega\in \N$ which may be increased several times. First, if we restrict $i_\Omega$ large enough so that 
\[
\Vol_{g_i(t_0)}F_i(\Omega)\leq 2 \Vol_{g(t_0)}\Omega,
\]
then we obtain from \eqref{gibothsidesbound} and Lemma \ref{functionlemma} some small $t_\ep\in (0,t_0]$ (here $t_\ep$ does not depend on $i$ as long as $i\geq i_\Omega$) satisfying
\[
|\Vol_{g_i(t_\ep)}F_i(\Omega)-\Vol_{g_i(0)}F_i(\Omega)|<\ep.
\]
Then we increase $i_\Omega$ again if necessary to ensure that 
\[
|\Vol_{g(t_\ep)}\Omega-\Vol_{g_i(t_\ep)}F_i(\Omega)|<\ep.
\]
A direct application of the triangle inequality completes the proof.
\end{proof}

\vspace{5pt}

\begin{claim}\label{gilimitgtlimit2}
For any precompact Borel set $\Omega\subset M_\infty$ (no other assumptions), there holds
\[
\lim_{i\to \infty}\Vol_{g_i}F_i(\Omega)=\lim_{t\to 0}\Vol_{g(t)}\Omega.
\]
\end{claim}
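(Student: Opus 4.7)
The plan is to extend the strategy of Claim \ref{gilimitgtlimit} by trading the multiplicative volume-perturbation bound from Lemma \ref{VolPertwithThm} (whose reliance on a positive lower bound of $\Vol_{g(t)}\Omega$ forced the nonempty-interior hypothesis) for an additive Lipschitz estimate on $t \mapsto [\Vol_{g(t)}\Omega]^{2/n}$ that needs no such lower bound and therefore applies to every precompact Borel set. The case where $\Omega$ is Lebesgue-null is trivial since the $F_i$'s are diffeomorphisms and both sides vanish, so I assume $\Omega$ carries positive volume.

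\textbf{Step 1 (uniform scalar concentration).} Fix a precompact open $U \subset M_\infty$ with $\bar\Omega \subset U$ and repeat the covering argument of Claim \ref{nonemtpyintclaim1} applied to $U$: pick a small fixed $T_0 > 0$, choose $x_1,\ldots,x_N \in M_\infty$ with $\bar U \subset \bigcup_{j=1}^N B_{g(T_0)}(x_j, \tfrac{1}{100})$, and use the smooth convergence $F_i^* g_i(T_0) \to g(T_0)$ together with SBL to get
\[
F_i(\bar U) \subset \bigcup_{j=1}^N B_{g_i(0)}\bigl(F_i(x_j), \tfrac{1}{16}\bigr) \quad\text{for all } i \text{ sufficiently large.}
\]
Summing \eqref{1.4propertiesofshorttime} over these $N$ balls and using $|\sR|^{n/2} \leq C(n)|\Rm|^{n/2}$ produces the uniform bound $\int_{F_i(\Omega)}|\sR_{g_i(t)}|^{n/2}\,dV_{g_i(t)} \leq C(n)\,N\alpha^{n/2} =: K_0$ for all $t \in [0,T]$ and all large $i$. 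Since $F_i^* g_i(t) \to g(t)$ in $C^\infty_{loc}$ for each fixed $t > 0$, dominated convergence transfers this to $\int_\Omega|\sR_{g(t)}|^{n/2}\,dV_{g(t)} \leq K_0$ for every $t \in (0,T]$.

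\textbf{Step 2 (Lipschitz estimates).} From $\partial_t \Vol_{g(t)}\Omega = -\int_\Omega \sR_{g(t)}\,dV_{g(t)}$ and Hölder's inequality, one has $|\partial_t [\Vol_{g(t)}\Omega]^{2/n}| \leq \tfrac{2}{n}K_0^{2/n}$ on $(0,T]$, so $[\Vol_{g(t)}\Omega]^{2/n}$ extends continuously to $t = 0$ and $\ell := \lim_{t \to 0}\Vol_{g(t)}\Omega$ exists. The identical computation applied to the smooth flows $g_i(t)$ gives the uniform bound
\[
\bigl|[\Vol_{g_i(t)}F_i(\Omega)]^{2/n} - [\Vol_{g_i}F_i(\Omega)]^{2/n}\bigr| \leq \tfrac{2}{n}K_0^{2/n}\,t
\]
for all $i$ sufficiently large and all $t \in [0,T]$.

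\textbf{Step 3 (convergence).} For any fixed $t > 0$, dominated convergence with $F_i^* g_i(t) \to g(t)$ yields $\Vol_{g_i(t)}F_i(\Omega) \to \Vol_{g(t)}\Omega$. Given $\epsilon > 0$, first choose $t > 0$ small enough that the two Lipschitz terms from Step 2 are each below $\epsilon$ (using that all volumes are bounded uniformly above on the precompact set $U$ to convert between $\Vol$ and $\Vol^{2/n}$), then let $i \to \infty$ to make $|\Vol_{g_i(t)}F_i(\Omega) - \Vol_{g(t)}\Omega| < \epsilon$; a triangle inequality closes the argument and produces $\lim_{i\to\infty}\Vol_{g_i}F_i(\Omega) = \ell$. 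The only genuine obstacle is Step 1, specifically keeping the cover size $N$ uniform in $i$; this is handled exactly as in the proof of Claim \ref{nonemtpyintclaim1} via Claim \ref{GHcorclaim} and SBL, so no qualitatively new ideas beyond that previous claim are required.
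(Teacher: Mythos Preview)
Your proof is correct and takes a genuinely different route from the paper's. The paper argues by augmenting $\Omega$ with a small ball $B_{d_0}(x_\infty,r)$ so that the resulting set has nonempty interior, invokes Claims \ref{nonemtpyintclaim1} and \ref{gilimitgtlimit} on $\Omega\cup B_{d_0}(x_\infty,r)$, and then shows via the upper volume ratio bound that the ball's contribution vanishes as $r\to 0$; this yields $\liminf_{t\to 0}\Vol_{g(t)}\Omega = \limsup_{r\to 0}\lim_{t\to 0}\Vol_{g(t)}(\Omega\cup B_{d_0}(x_\infty,r))$ (and the analogue for the $g_i$'s), from which existence and equality of the two limits follow. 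Your approach instead bypasses the nonempty-interior hypothesis entirely by observing that H\"older's inequality applied to $\partial_t\Vol_{g(t)}\Omega=-\int_\Omega\sR_{g(t)}\,dV_{g(t)}$ gives a uniform Lipschitz bound on $t\mapsto[\Vol_{g(t)}\Omega]^{2/n}$, with no lower volume bound required. This is more direct: it avoids the augmentation trick and the somewhat delicate $\liminf$/$\limsup$ bookkeeping, and the covering argument you borrow from Claim \ref{nonemtpyintclaim1} is now needed only to bound the scalar-curvature integral, not to feed into Lemma \ref{VolPertwithThm}. The paper's approach has the modest advantage of reusing the multiplicative estimate already in hand; yours has the advantage of isolating the clean analytic fact that $V^{2/n}$ is globally Lipschitz whenever $\int|\sR|^{n/2}$ is bounded, which is of independent interest and could streamline the earlier claims as well.
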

\begin{proof} 
We begin by claiming that
\be\label{Borele1}
\liminf_{t\to 0}\Vol_{g(t)} \Omega=\limsup_{r\to 0}\lim_{t\to 0}\Vol_{g(t)}(\Omega\cup B_{d_0}(x_\infty, r)).
\ee
The ``$\leq$" inequality is obvious, so assume for the sake of contradiction that there is some $\ep>0$ such that 
\be\label{Borele1.1}
\ep+\liminf_{t\to 0}\Vol_{g(t)} \Omega\leq \limsup_{r\to 0}\lim_{t\to 0}\Vol_{g(t)}(\Omega\cup B_{d_0}(x_\infty, r)).
\ee
Define $r_0=\sqrt[n]{\frac{\ep}{2^{n+4}\mathfrak{v}_0}}$, i.e., $2^{n+4}\mathfrak{v}_0r_0^n=\ep$. Then select some $r_1\in (0,r_0)$ small enough so that 
\be\label{Borele2}
\limsup_{r\to 0}\lim_{t\to 0}\Vol_{g(t)}(\Omega\cup B_{d_0}(x_\infty, r))\leq \lim_{t\to 0}\Vol_{g(t)}(\Omega\cup B_{d_0}(x_\infty, r_1))+\frac{\ep}{4}.
\ee
Since the limit on the RHS of \eqref{Borele2} exists by Claim \ref{gilimitgtlimit}, we may take $t_0>0$ small enough so that whenever $s\in (0,t_0]$,
\be\label{Borele3}
\lim_{t\to 0}\Vol_{g(t)}(\Omega\cup B_{d_0}(x_\infty, r_1))\leq \Vol_{g(s)}(\Omega\cup B_{d_0}(x_\infty, r_1))+\frac{\ep}{4}.
\ee
Similarly, we can find a sequence $t_j\to 0$ such that for every $j\in\N$, 
\be\label{Borele4}
\Vol_{g(t_j)}\Omega-\frac{\ep}{4}\leq\liminf_{t\to 0}\Vol_{g(t)} \Omega.
\ee
Thus whenever $t_j\leq t_0$, we can see by \eqref{Borele1.1} - \eqref{Borele4} that
\be\label{Borele5}
\frac{\ep}{4} +\Vol_{g(t_j)}\Omega\leq \Vol_{g(t_j)}(\Omega\cup B_{d_0}(x_\infty, r_1))\leq
\Vol_{g(t_j)}\Omega+\Vol_{g(t_j)} (B_{d_0}(x_\infty, r_1)).
\ee
For any $j\in\N$, we can assume that whenever $i$ is large enough (depending on $j$), we have
\[
\Vol_{g(t_j)} (B_{d_0}(x_\infty, r_1))\leq 2\Vol_{g_i(t_j)}F_i(B_{d_0}(x_\infty, r_1))\leq 2\Vol_{g_i(t_j)}(B_{g_i}(x_i, 2r_1)).
\]
Restricting $j$ large enough so that $t_j\leq \frac{(2r_1)^2}{\beta^2}$, we obtain from Lemma \ref{VolPertwithThm} and \eqref{Borele5} that
\[
\frac{\ep}{4}\leq 2\Vol_{g_i(t_j)}(B_{g_i}(x_i, 2r_1))\leq 4\Vol_{g_i}(B_{g_i}(x_i, 2r_1)).
\]
Here we have assumed that $i$ is sufficiently large (depending on $j$), and that $j$ in turn is large enough to ensure that $C^{t_j}\leq 2$, where $C$ is the constant from Lemma \ref{VolPertwithThm}. Now a contradiction is immediately apparent because we have assumed $r_1<r_0$, and we know that $\Vol_{g_i}(B_{g_i}(x_i, 2r_1))\leq \mathfrak{v}_0(2r_1)^n$. Thus we have proved that \eqref{Borele1} holds. Moreover, using a similar (albeit less involved) approach to the proof of \eqref{Borele1}, we also obtain
\be\label{Borele6}
\liminf_{i\to \infty}\Vol_{g_i} F_i(\Omega)=\limsup_{r\to 0}\lim_{i\to \infty}\Vol_{g_i}F_i(\Omega\cup B_{d_0}(x_\infty, r)).
\ee
Not only can we say from \eqref{Borele1} and \eqref{Borele6} that the limits $\lim_{i\to \infty}\Vol_{g_i} F_i(\Omega)$ and $\lim_{t\to 0}\Vol_{g(t)}$ exist, but we also can say that for any $\ep>0$, we can take $r>0$ small enough so that 
\ba
|\lim_{i\to \infty}\Vol_{g_i}F_i(\Omega)-&\lim_{t\to 0}\Vol_{g(t)}\Omega|\leq |\lim_{i\to \infty}\Vol_{g_i}F_i(\Omega)-\lim_{i\to \infty}\Vol_{g_i}F_i(\Omega\cup B_{d_0}(x_\infty, r))|\\&\quad+|\lim_{i\to \infty}\Vol_{g_i}F_i(\Omega\cup B_{d_0}(x_\infty, r))-\lim_{t\to 0}\Vol_{g(t)}(\Omega\cup B_{d_0}(x_\infty, r))|\\&\quad+|\lim_{t\to 0}\Vol_{g(t)}(\Omega\cup B_{d_0}(x_\infty, r))-\lim_{t\to 0}\Vol_{g(t)}\Omega|\leq \ep+0+\ep.
\end{align*}
This completes the proof of the claim. 
\end{proof}

\vspace{5pt}

We are now ready to formally define the measure $\mu$. For the sake of brevity, let us write $\sA_j$ to denote the $j$'th annulus centered at $x_\infty$ with respect to the metric $d_0$, i.e., 
\[
\sA_j:=B_{d_0}(x_\infty, j)\setminus B_{d_0}(x_\infty, j-1).
\] 
Also we write $\operatorname{Borel}(\Omega)$ to denote the $\sigma$-algebra of all Borel subset of $\Omega$ and recall a basic measure theory fact that 
\[
\operatorname{Borel}(\sA_j)=\{E\cap \sA_j : E\subset\operatorname{Borel}(M_\infty)\}.
\]
Thus for each $j\in\N$, we define $\mu_j: \operatorname{Borel}(\sA_j)\to \R$ by 
\be\label{defnofmu}
\mu_j(E)=\lim_{t\to 0}\Vol_{g(t)}E \;\; \text{ (or equivalently, } \mu_j(E)=\lim_{i\to \infty}\Vol_{g_i}F_i(E) \text{)},
\ee
which is a well-defined function by Claim \ref{gilimitgtlimit2}. Additionally, by an elementary measure theory argument (see \cite{MSE} for instance), we may conclude that each $\mu_j$ is countably additive, or equivalently in this setting, that each tuple $(\sA_j, \operatorname{Borel}(\sA_j), \mu_j)$ is in fact a measure space. Now for any $E\in \operatorname{Borel}(M_\infty)$, we define
\[
\mu(E):=\sum_{j=1}^\infty \mu_j(E\cap \sA_j).
\]
This can be seen to define a measure on $M_\infty$ because if $E=\cup_{k=1}^\infty E_k$ for some pairwise disjoint sequence of Borel sets $E_k$, we have
\[
\mu(E)=\sum_{j=1}^\infty \mu_j(E\cap \sA_j)=\sum_{j=1}^\infty \sum_{k=1}^\infty\mu_j(E_k\cap \sA_j)=\sum_{k=1}^\infty \sum_{j=1}^\infty\mu_j(E_k\cap \sA_j)=\sum_{k=1}^\infty\mu(E_k),
\]
where we have applied Tonelli's Theorem and the countable additivity of each $\mu_j$. Finally, to show that $\mu$ is Radon, we must show that it is outer and inner regular, i.e., that
\[
\mu(E)=\sup\{\mu(K) : K\subset E \text{ is compact}\}=\inf\{\mu(U) : U\supset E \text{ is open}\}
\]
for any $E\in \operatorname{Borel}(M_\infty)$. Since the proof of inner regularity is very similar to that of the outer regularity, we will only show the latter. To that end, fix some $E\in \operatorname{Borel}(M_\infty)$ and $\ep>0$. Just as in the proof of \eqref{Borele1} in Claim \ref{gilimitgtlimit2}, for each $j\in\N$ we can find some $x_j\in M_\infty$ and $r_j>0$ such that $B_{d_0}(x_j, r_j)\subset \sA_j$ and that
\be\label{Radon0}
\mu(B_{d_0}(x_j, r_j))\leq \frac{\ep}{2^{j+2}}.
\ee
Write $B_j=(E\cap \sA_j) \cup B_{d_0}(x_j, r_j)$, which is precompact in $M_\infty$, so we may choose a finite sequence of points $\{y_k\}_{k=1}^N\subset M_\infty$ such that 
\be\label{containmentRadon}
B_j\subset \bigcup_{k=1}^N B_{d_0}\left(y_k, \frac{1}{100}\right).
\ee
Now choose $t_j'>0$ small enough such that whenever $t\in (0,t_j']$, 
\be\label{Radon1}
|\mu_j(B_j)-\Vol_{g(t)}(B_j)|<\frac{\ep}{2^{j+2}}.
\ee
Using the fact that $\Vol_{g(t)}$ is a Riemannian measure (and subsequently, Radon), we may choose some precompact open set $U_{j,t}\subset M_\infty$, containing $B_j$, that satisfies
\be\label{Radon2}
\Vol_{g(t)}(U_{j,t})-\Vol_{g(t)}(B_j)<\frac{\ep}{2^{j+2}}.
\ee
Moreover by intersecting $U_{j,t}$ with the finite open cover from \eqref{containmentRadon}, we may assume
\[
U_{j,t}\subset \bigcup_{k=1}^N B_{d_0}\left(y_k, \frac{1}{100}\right).
\]
Importantly, here $N$ is independent of $t\in (0,t_j']$. Thus by a similar argument to that of the proof of Claim \ref{gilimitgtlimit2}, we obtain that \be\label{Radon3}
|\Vol_{g(t)}(U_{j,t})-\mu(U_{j,t})|<\frac{\ep}{2^{j+2}}
\ee
whenever $t$ is sufficiently small. So for each $j\in \N$, we select $t_j>0$ small enough so that all of \eqref{Radon1}, \eqref{Radon2}, \eqref{Radon3} hold. Then with $U=\bigcup_{j=1}^\infty U_{j,t_j}$ (which clearly is open and contains $E$ because each $U_{j,t_j}$ contains $E\cap \sA_j$), we may apply \eqref{Radon0} and \eqref{Radon1} - \eqref{Radon3} along with the fact that $\mu|_{\sA_j}=\mu_j$ to see that
\ba
\mu(U)-\mu(E)&\leq \sum_{j=1}^\infty \mu(U_{j,t_j})-\sum_{j=1}^\infty\mu_j(B_j)+\sum_{j=1}^\infty\mu_j(B_{d_0}(x_j,r_j))\\&\leq
 \sum_{j=1}^\infty \left[\Vol_{g(t_j)}(U_{j,t_j})+\frac{\ep}{2^{j+2}}\right]-\sum_{j=1}^\infty\left[\Vol_{g(t_j)}(B_j)-\frac{\ep}{2^{j+2}}\right]+\sum_{j=1}^\infty \frac{\ep}{2^{j+2}}\\&\leq  \sum_{j=1}^\infty \left[\frac{\ep}{2^{j+2}}+\frac{\ep}{2^{j+2}}+\frac{\ep}{2^{j+2}}+\frac{\ep}{2^{j+2}}\right]=\ep.
\end{align*}
This completes the proof. 

\vspace{5pt}

\subsection{Proof of Theorem \ref{longtimecorollary}}\hfill

\vspace{5pt}

We begin by assuming that $\sigma(n,A,\mathfrak{v}_0)$ is bounded from above by $\sigma(n,A,1,\mathfrak{v}_0)$ from the statement of Theorem \ref{shortimeexistencemain} (in other words, set $\tau=1$), and we will further shrink $\sigma$ if necessary in the proofs of properties \eqref{longtimecorconc1} and \eqref{longtimecorconc2}. Because the assumptions of the theorem are scale-invariant, we may apply Theorem \ref{shortimeexistencemain} to successively extreme blowdowns $R^{-2}g$, which (after scaling back) gives a family of Ricci flows $g_R(t)$ on $M\times [0, TR^2]$, for $R$ arbitrarily large. Note in particular that we have used properties of the local $\bar\nu$-functional:
\[
\bar\nu(B_{R^{-2}g}(x,1), R^{-2}g, 1)=\bar\nu(B_{g}(x,R), g, R^2)\geq \bar\nu(M, g)\geq -A.
\]
Then just as in the proof of Theorem \ref{shortimeexistencemain}, we can extract a converging subsequence
\[
g_{R_j}(t) \xrightarrow{C^{\infty}_{loc}(M\times [0,\infty))} g(t),
\]
where $g(0)=g$. By virtue of the conclusions \eqref{shorttimeexistenceconclusions} being rescaled as well, we can conclude that the long-time complete flow $g(t)$ satisfies the following for all $t\in (0,\infty)$:
\be\label{longtimecorconclusions}
\begin{cases}
&|\Rm|_{g(t)}\leq C_0 \sigma t\inv; \\
&\inj_{g(t)}\geq C_0\inv \sqrt t;\text{ and}\\
&\left(\int_{M} |\Rm|_{g(t)}^{\frac{n}{2}}\,dV_{g(t)}\right)^{\frac{2}{n}}\leq C_0\sigma.
\end{cases}
\ee
Here $C_0\geq 1$ depends only on $n,A,\mathfrak{v}_0$ (recall we fixed $\tau=1$ here). Moreover, by the proof of Theorem \ref{shortimeexistencemain}, we can also conclude that we have a global $L^2$-Sobolev inequality and a uniform lower bound on all volume ratios along the flow. More precisely, we know that \eqref{step1sobolevinequality} holds for the metrics $R^{-2}g$. So as $R\to \infty$, we obtain
\be\label{Sobinequalityforalltime}
\left(\int_M |u|^{\frac{2n}{n-2}}\,dV_{g(t)}\right)^{\frac{n-2}{n}}\leq C_S\int_M |\nabla^{g(t)} u|^2_{g(t)}\,dV_{g(t)}
\ee
for all compactly supported $u\in W^{1,2}(M,g(t))$, where $C_S(n,A,1)$ is independent of $t\in [0,\infty)$. Similarly, by rescalings of \eqref{lowerboundonvolumes}, we obtain
\be\label{uniformlypositiveVR}
\inf_{x\in M, r>0, t\geq 0}\frac{\Vol_{g(t)}B_{g(t)}(x,r)}{r^n}\geq \tilde v(n,C_S) .
\ee
Now with these properties we are ready to show that conclusions \eqref{longtimecorconc1}, \eqref{longtimecorconc2}, \eqref{longtimecorconc3} hold. \\

\begin{prop}
Conclusion \eqref{longtimecorconc1} holds.
\end{prop}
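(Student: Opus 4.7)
The plan is to smooth the initial metric $g$ via the long-time Ricci flow $g(t)$ just constructed and reduce the topological question to the bounded-curvature regime, where the rigidity theorem of \cite{CM} applies. The key observation is that Ricci flow preserves the underlying smooth manifold, so $(M,g)$ and $(M,g(t))$ are the same smooth manifold for every $t\ge 0$; it therefore suffices to analyze $(M,g(1))$ (any fixed positive time would do).

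By \eqref{longtimecorconclusions} and \eqref{uniformlypositiveVR}, the time-one metric $g(1)$ is a complete Riemannian manifold with uniformly bounded curvature $|\Rm|_{g(1)}\le C_0\sigma$, uniform injectivity radius lower bound $\inj_{g(1)}\ge C_0^{-1}$, globally small curvature concentration $\bigl(\int_M |\Rm|_{g(1)}^{n/2}\,dV_{g(1)}\bigr)^{2/n}\le C_0\sigma$, and a uniform positive lower bound $\tilde v$ on all volume ratios. To feed $(M,g(1))$ into the topological rigidity theorem of \cite{CM}, I would also need to propagate the hypothesis \eqref{longtime1} (global upper volume bound) from $g(0)$ to $g(1)$. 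On scales bounded above by a universal $r_0(n,A)$, this is immediate from the Volume Perturbation Lemma \ref{VolPertwithThm}. On larger scales, I would combine SBL (to relate $g(1)$-balls to $g(0)$-balls on bounded scales), a covering argument by $r_0$-balls, and the scale invariance of the full set of hypotheses of Theorem \ref{longtimecorollary}, so that the same blow-down construction used to produce $g(t)$ transfers the global upper volume bound uniformly in the scale.

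Once $(M,g(1))$ is verified to sit in the bounded-curvature, almost-Euclidean regime with small $L^{n/2}$ curvature — i.e.\ in the hypothesis class of \cite{CM} — their topological rigidity result produces a homeomorphism $M\cong \R^n$, upgraded to a diffeomorphism in every dimension $n\ne 4$ (the dimension-$4$ restriction coming from the absence of a smoothing theorem in that dimension). This may require further shrinking $\sigma$ so that $C_0\sigma$ is below the smallness threshold demanded by \cite{CM}, which is harmless since $\sigma$ is ours to choose.

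The main obstacle in the plan is the propagation of the upper volume bound to all scales of $g(1)$: \ref{VolPertwithThm} handles small scales cleanly using the flow estimates, but controlling large $g(1)$-scales is delicate without an a priori large-scale estimate on how $g(t)$-balls relate to $g(0)$-balls, and will likely need a rescaled EBL-type argument mirroring the long-time existence construction itself, exploiting that the hypotheses \eqref{longtime1}--\eqref{longtime4} are scale invariant.
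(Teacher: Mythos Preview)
Your route---smooth to $g(1)$ and invoke the rigidity theorem of \cite{CM}---is genuinely different from the paper's and is plausible in outline, but the proposal is incomplete and the obstacle you isolate is likely the wrong one. The paper itself remarks (discussion following Question~1) that \cite{CM}'s topological rigidity does \emph{not} require an upper volume bound but instead ``an additional mild hypothesis on the curvature'' that you never address; so your attempt to propagate \eqref{longtime1} to all scales of $g(1)$ may be unnecessary, while the hypothesis you actually need to verify goes unmentioned. Since $g(1)$ has bounded curvature, satisfies the global Sobolev inequality \eqref{Sobinequalityforalltime}, and has small curvature concentration by \eqref{longtimecorconclusions}, it is quite possible your approach closes immediately once you consult what \cite{CM} actually requires---but as written you have an acknowledged gap and a misdirected repair.

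The paper's own argument avoids \cite{CM} entirely for this conclusion and works directly with the flow. Applying Lemma~\ref{EBL} to the rescaled flows $g_R(t)=R^{-2}g(R^2t)$ shows each initial ball $B_{g(0)}(x_0,R)$ is contained in a $g_R(t)$-ball of controlled radius; one then chooses a fixed time $T$ large enough (and shrinks $\sigma$ accordingly) so that the injectivity radius bound in \eqref{longtimecorconclusions} forces that $g_R(T)$-ball to lie within the injectivity radius at $x_0$, hence be diffeomorphic to a Euclidean ball. Scaling back, every $B_{g(0)}(x_0,R)$ is an open $n$-cell, and Brown's theorem \cite{Brown} on monotone unions of $n$-cells gives $M\cong\R^n$. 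This is self-contained within the paper's tools and sidesteps both your volume-propagation issue and any dependence on the precise hypotheses of \cite{CM}.
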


\begin{proof}
The proof is similar to the proof of {\cite[Theorem 1.2]{CHL}}. First note that since assumption \eqref{longtime1} as well as each of the conclusions in \eqref{longtimecorconclusions} and \eqref{uniformlypositiveVR} are scale-invariant, the flows $g_R(t)=R^{-2}g(R^2t)$ satisfy these conclusions as well. Lemma \ref{EBL} then implies that whenever $t\in [0, \widehat{T}(n, C_0 \sigma, \tilde v, C_0\sigma)]$, we have
\[
B_{g_R(0)}(x_0, 1)  \subset B_{g_R(t)}(x_0, 2^{2n+2} \mathfrak{v}_0\tilde v\inv).
\]
Fix some $T> \widehat T$ large enough so that $C_0\inv \sqrt T \geq 2^{2n+3} \mathfrak{v}_0\tilde v\inv$, then shrink $\sigma$ accordingly so that for any Ricci flow $h(t)$, $t\in [0,T]$ satisfying \eqref{longtimecorconclusions} (in particular, the $g_R(t)$), 
\[
d_{h(T)}(x_0, \cdot)\leq 2d_{h(\widehat T)}(x_0, \cdot).
\]
Scaling back, we can say that $B_{g(0)}(x_0, R)$ is homeomorphic to $\R^n$ for all $R>0$. The fact that $M$ is homeomorphic to $\R^n$ follows by \cite{Brown}, and the homeomorphism can be taken to be a diffeomorphism when $n\neq 4$ because $\R^4$ is only Euclidean space with exotic differential structures. 
\end{proof}

\vspace{5pt}

\begin{prop}
Conclusion \eqref{longtimecorconc2} holds.
\end{prop}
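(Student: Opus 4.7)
My plan is to exploit the long-time Ricci flow $g(t)$, $t \in [0,\infty)$, whose scale-invariant estimates \eqref{longtimecorconclusions} together with the uniformly positive volume ratios \eqref{uniformlypositiveVR} force $(M,g(T))$ to look nearly Euclidean at scales $\lesssim c\sqrt T$ for large $T$ and suitably small $c$. The strategy is to establish a near-Euclidean volume lower bound at time $T$ and then transport it back to time $0$ using the Shrinking Balls Lemma (to compare balls at different times) and the Volume Perturbation Lemma \ref{volumeperturbation} (to compare volumes at different times via the integrated scalar curvature).

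More concretely, for a parameter $c>0$ to be optimized, the ball $B_{g(T)}(x,c\sqrt T)$ has scale-invariant curvature bound $|\Rm|\,(c\sqrt T)^2 \leq C_0\sigma c^2$ by \eqref{longtimecorconclusions} and lies inside the injectivity radius at $x$ provided $c\leq C_0^{-1}$; standard Jacobi comparison then yields $\Vol_{g(T)}B_{g(T)}(x,c\sqrt T) \geq \omega_n(c\sqrt T)^n(1-C(n)C_0\sigma c^2)$. Next, SBL with $c_0=C_0\sigma$ gives the containment $B_{g(T)}(x,c\sqrt T) \subset B_g(x,\rho)$ for $\rho:=\sqrt T(c+\beta\sqrt{C_0\sigma})$. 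Setting $\Omega:=B_g(x,\rho)$ and noting the global bound $\int_M|\sR_{g(t)}|^{n/2}dV_{g(t)}\leq n^{n/2}(C_0\sigma)^{n/2}$ together with the lower volume ratio \eqref{uniformlypositiveVR} at scale $c\sqrt T$ (which is valid on all of $[0,T]$ by SBL), Lemma \ref{volumeperturbation} applies with $r_0=c\sqrt T$. The crucial feature is that the perturbation exponent $\sigma v^{-2/n}r_0^{-2}(T-0)$ reduces to $nC_0\sigma\tilde v^{-2/n}/c^2$, which is \emph{independent} of $T$. Chaining the estimates and dividing by $\rho^n$ yields
\[
\frac{\Vol_g B_g(x,\rho)}{\rho^n} \geq \omega_n\,\Phi(c,\sigma),\qquad \Phi(c,\sigma):=e^{-nC_0\sigma\tilde v^{-2/n}/c^2}(1-C(n)C_0\sigma c^2)\left(\frac{c}{c+\beta\sqrt{C_0\sigma}}\right)^n,
\]
uniformly in $x\in M$. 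Letting $T\to\infty$ so that $\rho\to\infty$ proves $\liminf_{r\to\infty, x\in M}\Vol_g B_g(x,r)/r^n \geq \omega_n\,\Phi(c,\sigma)$.

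The main obstacle is obtaining the sharp constant $\omega_n$ rather than $\omega_n\Phi(c,\sigma)$, since $\Phi(c,\sigma)<1$ for every positive $\sigma$. The quantity $\Phi(c,\sigma)$ does tend to $1$ as $\sigma\to 0$ if one chooses $c=c(\sigma)$ appropriately (for example, $c\sim\sigma^{-1/4}$ simultaneously drives each of the three factors to $1$ while respecting the Step~1 constraints), so by shrinking $\sigma(n,A,\mathfrak v_0)$ further we can make the right-hand side arbitrarily close to $\omega_n$. To upgrade to the exact inequality $\liminf\geq\omega_n$, I expect to argue by contradiction using the scale-invariance of the hypotheses and the compactness of Theorem \ref{smoothingcorollary}: if $\liminf<\omega_n$ were attained along some sequence $r_j\to\infty$, $x_j\in M$, then the pointed metric-measure subsequential limit of $(M,r_j^{-2}g,x_j)\in\sM_\sigma$ would carry an eternal smooth Ricci flow satisfying \eqref{longtimecorconclusions}, and re-running the same estimate on the limit drives the bound past the supposed deficit, producing a contradiction.
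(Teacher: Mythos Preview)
Your chain of estimates (Jacobi comparison at time $T$, SBL to compare balls, and Lemma \ref{volumeperturbation} to compare volumes) is sound and yields the bound $\liminf \geq \omega_n\Phi(c,\sigma)$ you wrote down. The difficulty is exactly the one you flag: for any \emph{fixed} $\sigma>0$, the quantity $\sup_{c}\Phi(c,\sigma)$ is strictly less than $1$, so the direct argument cannot reach the sharp constant $\omega_n$ demanded by conclusion \eqref{longtimecorconc2}. Your proposed remedies do not close this gap. First, the choice $c\sim\sigma^{-1/4}$ violates the constraint $c\leq C_0^{-1}$ needed for the ball to sit inside the injectivity radius (and $C_0$ depends only on $n,A,\mathfrak v_0$, not on $\sigma$); without that containment the Jacobi/G\"unther lower bound is unavailable. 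Second, the compactness argument is circular as stated: the pointed limit of the rescalings $(M,r_j^{-2}g,x_j)$ still only satisfies the same estimates \eqref{longtimecorconclusions} with the same $\sigma$, so ``re-running the same estimate on the limit'' produces the same deficit $\Phi(c,\sigma)$ and no contradiction.

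The paper supplies the missing idea: before passing to blowdowns it invokes \cite[Theorem 1.2]{CM} (applied to the bounded-curvature metric $g(1)$, using the global Sobolev inequality \eqref{Sobinequalityforalltime} and the curvature-concentration bound in \eqref{longtimecorconclusions}, after shrinking $\sigma$ so that $C_0\sigma C_S\leq\delta(n)$) to obtain the \emph{qualitatively stronger} decay $t\,|\Rm|_{g(t)}\to 0$ as $t\to\infty$. This $o(1/t)$ decay is what forces the pointed blowdown limit $(M_\infty,g_\infty(t))$ to be the static flow on flat $\R^n$, and it is precisely this identification of the limit that converts the near-Euclidean volume bound into the exact $\omega_n$. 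In effect, the $o(1/t)$ decay plays the role of sending $\sigma\to 0$ \emph{along the flow} while keeping the actual $\sigma$ fixed; your argument lacks any mechanism of this kind.
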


\begin{proof}
The proof is similar to the proof of {\cite[Theorem 1.2]{Martens}}. First, we further shrink $\sigma$ if necessary to ensure that 
\[
\sigma\leq\min\left\{ \frac{\delta}{C_0 C_S}, \frac{1}{4\beta^2C_0}\right\}
\]
where $\delta=\delta(n)$ is the dimensional constant from {\cite[Theorem 1.2]{CM}}. We may now consider $\sigma$ to be fixed. Then by the smallness of the global curvature concentration in \eqref{longtimecorconclusions} and the Sobolev inequality \eqref{Sobinequalityforalltime}, we may apply {\cite[Theorem 1.2]{CM}} to the bounded curvature metric $g(1)$ to conclude faster than $1/t$ curvature decay:
\be\label{fasterthan1/tdecay}
\lim_{t\to \infty}t\sup_{x\in M}|\Rm|_{g(t)}(x)=0.
\ee
Now we argue by contradiction by assuming there is some $\ep>0$ and sequences $x_j\in M$ and $R_j\to \infty$ with 
\be\label{longtimebassumption}
\Vol_g B_g(x_j, R_j)< (1-\ep) \omega_n R_j^n.
\ee
Consider the blowdown pointed flows $(M, g_j(t), x_j)$, $t\in (0,1]$, where $g_j(t)=R_j^{-2} g(R_j^2 t)$. By the estimates \eqref{longtimecorconclusions}, we obtain - via Hamilton's Compactness \cite{Hamilton} - that $g_j(t)$ subconverges to a limiting complete flow $(M_\infty, g_\infty(t))$, $t\in (0,1]$. Furthermore, by nature of the uniform faster than $1/t$ curvature decay \eqref{fasterthan1/tdecay} and uniform lower bound on volume ratios \eqref{uniformlypositiveVR}, we can see that this flow must be isometric to the static flow on flat Euclidean space. Take some $t_\ep$ small enough (depending on $\ep, C_0, \sigma, \tilde v$ and thus only on $\ep, n, A, \mathfrak{v}_0$) such that 
\be\label{conditionontep}
\min\left\{\exp\left(-4n C_0 \sigma \tilde v^{-\frac{2}{n}}t_\ep\right),\left(1-\beta\sqrt{C_0\sigma t_\ep}\right)^n\right\}\geq (1-\ep)^{\frac{1}{3}}.
\ee
Then we can say that for $j$ sufficiently large, we have
\ba
\Vol_{g_j(0)}B_{g_j(0)}(x_j, 1)&\geq (1-\ep)^{\frac{1}{3}}\Vol_{g_j(t_\ep)}B_{g_j(0)}(x_j, 1)\\&\geq (1-\ep)^{\frac{1}{3}}\Vol_{g_j(t_\ep)}B_{g_j(t_\ep)}\left(x_j, 1-\beta\sqrt{C_0\sigma t_\ep}\right)\\&\geq
(1-\ep)^{\frac{2}{3}}\left(1-\beta\sqrt{C_0\sigma t_\ep}\right)^n \omega_n\\&\geq
(1-\ep)\omega_n.
\end{align*}
Here we have used Lemma \ref{volumeperturbation} along with $|\sR_g|\leq n|\Rm|_g$ and the first condition on $t_\ep$ in \eqref{conditionontep} in the first line, SBL in the second line, the pointed convergence $(M,g_j(t_\ep),x_j)\to (\R^n, g_E, 0)$ in the third line, and the second condition on $t_\ep$ in \eqref{conditionontep} in the final line. After scaling back, we can see immediately that this contradicts our assumption \eqref{longtimebassumption}, whenever $R_j$ is sufficiently large. 
\end{proof}

\begin{remark}
Precisely how large $R$ needs to be to derive a contradiction in the preceding proof not only depends on the variables $\ep, n, A, \mathfrak{v}_0$, but also on the rate of the convergence in \eqref{fasterthan1/tdecay}, which in turn possibly depends on some other properties of $g$ not captured by the assumptions of the theorem. 
\end{remark}

\begin{prop}
Conclusion \eqref{longtimecorconc3} holds.
\end{prop}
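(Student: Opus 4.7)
The plan is to mimic the blow-down contradiction from the proof of \eqref{longtimecorconc2}, replacing the sequence of rescalings of a fixed manifold by a sequence of distinct manifolds whose curvature concentrations $\ep_i\to 0$. The hoped-for simplification is that in this regime the curvature on the flow decays ``for free'' rather than via the faster-than-$1/t$ decay result \cite{CM} invoked for \eqref{longtimecorconc2}.

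Concretely, I would argue by contradiction: suppose $\delta>0$ and there exist $\ep_i\to 0$, $(M_i,g_i)\in\sM_{\ep_i}$, $x_i\in M_i$, $r_i>0$ with $\Vol_{g_i}B_{g_i}(x_i,r_i)<(1-\delta)\omega_n r_i^n$. Scale-invariance of the hypotheses defining $\sM_{\ep_i}$ lets me rescale by $r_i^{-2}$ and assume $r_i=1$. Then I apply the long-time Ricci flow construction from the start of the proof of Theorem \ref{longtimecorollary} to each $(M_i,g_i)$ (with $\ep_i$ playing the role of $\sigma$), producing complete flows $g_i(t)$ on $M_i\times[0,\infty)$ satisfying \eqref{longtimecorconclusions}, the global $L^2$-Sobolev inequality \eqref{Sobinequalityforalltime}, and the positive volume-ratio bound \eqref{uniformlypositiveVR}, all with constants depending only on $n,A,\mathfrak{v}_0$. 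Crucially, the curvature estimate now reads $|\Rm|_{g_i(t)}\leq C_0\ep_i/t$, which tends pointwise to $0$ on $(0,\infty)$ as $i\to\infty$.

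Next I would extract a Cheeger-Gromov limit. On the open interval $(0,\infty)$ the pointed flows $(M_i,g_i(t),x_i)$ satisfy Hamilton's compactness hypotheses uniformly: uniform curvature bounds on compact subintervals bounded away from $0$, the injectivity radius lower bound $C_0\inv\sqrt t$, and the noncollapsing from \eqref{uniformlypositiveVR}. Passing to a subsequence gives a smooth pointed complete limiting Ricci flow $(M_\infty,g_\infty(t),x_\infty)$, which must be a static flat complete Ricci flow since $|\Rm|_{g_i(t)}\to 0$ on $(0,\infty)$. Combined with the uniform positive lower bound on volume ratios inherited by the limit, this forces $(M_\infty,g_\infty(t),x_\infty)$ to be the static Euclidean flow on $(\R^n,g_E,0)$. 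The main obstacle I anticipate is precisely this identification step — ruling out flat quotients of $\R^n$ — but the uniform noncollapsing on \emph{all} scales excludes them.

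The endgame is then identical in structure to that of \eqref{longtimecorconc2}: pick $t_\delta>0$ small (depending only on $\delta,n,A,\mathfrak{v}_0$) so that $(1-\beta\sqrt{C_0\sigma t_\delta})^n\geq(1-\delta)^{1/2}$, and for large $i$ chain together (a) Lemma \ref{volumeperturbation} applied with the improved integral scalar curvature bound $(\int_{M_i}|\sR_{g_i(t)}|^{n/2}\,dV_{g_i(t)})^{2/n}\leq nC_0\ep_i$, whose exponential prefactor now tends to $1$ as $\ep_i\to 0$; (b) the Shrinking Balls Lemma to pass from $B_{g_i(0)}(x_i,1)$ down to $B_{g_i(t_\delta)}(x_i,1-\beta\sqrt{C_0\sigma t_\delta})$; and (c) the smooth Cheeger-Gromov convergence at the fixed time $t_\delta$ to evaluate this latter volume as $(1-o(1))(1-\beta\sqrt{C_0\sigma t_\delta})^n\omega_n$. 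Chaining these yields $\Vol_{g_i(0)}B_{g_i(0)}(x_i,1)\geq(1-\delta)\omega_n$ for all large $i$, the desired contradiction.
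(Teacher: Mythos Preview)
Your argument is correct, and it takes a genuinely different route from the paper's proof. The paper argues directly: it first establishes (Claim~\ref{partcintermediateclaim}, quoting \cite[Theorem~4.2]{CCL}) that for $\ep$ small enough the flow satisfies $\Vol_{g(t)}B_{g(t)}(x,\sqrt t)\geq (1-\delta/2)\omega_n t^{n/2}$ for \emph{all} $(x,t)$, then combines this with SBL and Lemma~\ref{volumeperturbation} at the single time $t_\ep=(1-\beta\sqrt{C_0\ep})^2$ to transfer the bound back to $t=0$. Your contradiction/compactness argument instead manufactures flatness of the limit directly from $\ep_i\to 0$ (so you never need the almost-Euclidean volume estimate along the flow from \cite{CCL}), and then runs the same Volume Perturbation + SBL endgame at a fixed time $t_\delta$, using Cheeger--Gromov convergence to evaluate the $g_i(t_\delta)$-ball volume. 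The trade-off is that the paper's approach is in principle quantitative (one can track an explicit $\ep(\delta)$), whereas your compactness argument is soft and gives no effective bound; on the other hand, yours is more self-contained and visibly parallel to the proof of \eqref{longtimecorconc2}. A minor cosmetic point: since $\ep_i\to 0$, both the SBL loss $(1-\beta\sqrt{C_0\ep_i t_\delta})^n$ and the Volume Perturbation prefactor tend to $1$, so you do not actually need to choose $t_\delta$ small---any fixed positive time works.
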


\begin{proof}
We continue using the notation of $\sM_\ep$ as in the theorem statement where $n,A,\mathfrak{v}_0$ are all fixed. We first have the following claim:

\vspace{5pt}

\begin{claim}\label{partcintermediateclaim}
For any $\delta>0$, we can take $\ep$ small enough (depending only on $\delta, n, A, \mathfrak{v}_0$), so that for any $(M,g)\in \sM_\ep$ and associated long-time Ricci flow $g(t)$ as given above, there holds
\be\label{longtimeclaimc1}
\Vol_{g(t)}B_{g(t)}(x, \sqrt{t})\geq (1-\delta) \omega_n t^{\frac{n}{2}}
\ee
for all $(x,t)\in M\times (0,\infty)$.
\end{claim}

\begin{proof}[Proof of Claim]
Since the hypotheses and conclusion of the claim are invariant under scaling, it suffices to prove \eqref{longtimeclaimc1} holds only for some short time interval $t\in [0,T_1]$. Then since we are working locally, we may assume that the flow we are working on is actually complete with \emph{bounded curvature} and satisfying the estimates \eqref{shorttimeexistenceconclusions}. This assumption is validated by the fact that our (possibly initially unbounded curvature) Ricci flow is obtained by a smooth local limit of such flows (recall the proof of Theorem \ref{shortimeexistencemain}). The proof is then identical to the beginning of the proof of {\cite[Theorem 4.2]{CCL}}. 
\end{proof}

The remainder of the proof is similar to the proof of part \eqref{longtimecorconc2} with the notable difference being that instead of choosing a time $t_\ep$ sufficiently small depending on $\delta$, we take $\ep$ small instead. Thus for some given $\delta>0$, let $\ep_{\delta/2}$ be as given in Claim \ref{partcintermediateclaim}. Again by the invariance under scaling, we may assume that $r=1$. First assume that $\ep$ is small enough so that 
\[
\ep\leq \min\left\{\ep_{\delta/2},\frac{1}{4\beta^2 C_0}\right\}.
\]
Then writing $t_\ep=\left(1-\beta\sqrt{C_0\ep}\right)^2<1$, we apply Lemma \ref{volumeperturbation}, together with SBL and Claim \ref{partcintermediateclaim} to see that 
\ba
(1-\delta/2)\omega_n t_\ep^{\frac{n}{2}} &\leq \Vol_{g(t_\ep)}B_{g(t_\ep)}(x, \sqrt{t_\ep})\\&\leq \Vol_{g(t_\ep)}B_{g(0)}(x, 1)\\&\leq \exp\left(4nC_0\ep \tilde v^{-\frac{2}{n}}t_\ep\right)\Vol_{g(0)}B_{g(0)}(x, 1).
\end{align*}
Then the result follows if we further shrink $\ep$ if necessary to ensure that 
\[
(1-\delta/2) \left(1-\beta \sqrt{C_0 \ep}\right)^n \exp\left(-4nC_0\ep \tilde v^{-\frac{2}{n}}\right)\geq 1-\delta.
\]
\end{proof}

\end{document}